\newtheorem{theorem}{Theorem}[section]
\newtheorem{lemma}[theorem]{Lemma}
\newtheorem{proposition}[theorem]{Proposition}
\newtheorem{corollary}[theorem]{Corollary}
\theoremstyle{definition}
\newtheorem{definition}[theorem]{Definition}
\theoremstyle{remark}
\newcommand{\N}{\mathbb{N}}
\newcommand{\Z}{\mathbb{Z}}
\newcommand{\R}{\mathbb{R}}
\newcommand{\dcl}{\operatorname{dcl}}
\newcommand{\ovs}{\operatorname{ovs}}
\newcommand{\Cal}{\mathcal}
\renewcommand{\dcl}{\operatorname{dcl}}
\def \<{\langle}
\def \>{\rangle}
\def \((  {(\!(}
\def \)) {)\!)}
\numberwithin{equation}{section}
\begin{document}

\bibliographystyle{plain}
\title{VC-Density in Pairs of Ordered Vector Spaces}
 
\author{Ayhan G\"{u}nayd\i n}

\address{Bo\u{g}azi\c{c}i \"{U}niversitesi, Istanbul, Turkey}

\email{ayhan.gunaydin@bogazici.edu.tr}

\author{Ebru Nay\.Ir}

\address{Bo\u{g}azi\c{c}i \"{U}niversitesi, Istanbul, Turkey}

\email{ebru.nayir@std.bogazici.edu.tr}



\maketitle

\begin{abstract}
We show that the VC-density of any partitioned formula in a pair of ordered vector spaces is bounded above by twice the number of parameter variables.  We also show that this bound is optimal and, as a by-product, we prove that no dense pair of o-minimal structures is dp-minimal.
\end{abstract}

\section{Introduction}

We study pairs of ordered vector spaces over an ordered field, focusing on the VC-densities of formulas. The concept of VC-dimension was defined in the context of statistical learning by Vapnik and Chervonenkis; hence the acronym VC.  A finer invariant is the VC-density of a formula. We define these concepts in the last section, along with some facts about them. 

%

\medskip
Let $V$ be an ordered vector space over an ordered field $K$, and let $W$ be a proper dense subspace of $V$. We consider $V$ and $W$ as structures in the language $L_{\ovs}:=\{+,-,0,1,<\}\cup \{\ell_\lambda : \lambda\in K\}$, where $1$ is interpreted as a positive element of $W$.  Then $W$ is an elementary substructure of $V$. Also, let $L_{\ovs}^d:=L_{\ovs}\cup\{U\}$ be the enrichment of $L_{\ovs}$ by a unary relation symbol $U$. Letting $(V,W)$ denote the $L_{\ovs}^d$-structure where $U$ is interpreted as $W$, our main result is as follows. 

\begin{theorem}\label{main_doag}
The VC-density in $(V,W)$ of a partitioned $L_{\ovs}^d$-formula  $\phi( x; y)$ is at most $2|y|$.
\end{theorem}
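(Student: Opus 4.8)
The plan is to reduce the bound to two elementary counting problems by way of a quantifier-elimination--type normal form for the pair. The factor $2$ will appear because a parameter tuple $b$ carries two essentially independent pieces of data that are relevant to a formula of $(V,W)$ --- its order-theoretic position, and its image in the quotient $K$-vector space $V/W$ --- and density of $W$ in $V$ keeps these decoupled; each will cost $|y|$.

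The first step is to record a normal form: every $L_{\ovs}^{d}$-formula $\phi(x;y)$ is equivalent in $(V,W)$ to a Boolean combination of \emph{linear comparisons} $\sum_i\nu_ix_i+\sum_j\nu'_jy_j<\nu_0\cdot 1$ (all $\nu_\bullet\in K$, with ``$<$'' standing for any of $<,\le,=$) and of \emph{$U$-atoms} $U\bigl(\sum_i\lambda_ix_i+\sum_j\kappa_jy_j\bigr)$ (all $\lambda_\bullet,\kappa_\bullet\in K$; an additive constant $c\cdot 1$ inside $U$ is absorbed, since $c\cdot 1\in W$). This rests on the standard dense-pairs fact that every formula of $(V,W)$ is a Boolean combination of $L_{\ovs}$-formulas and formulas $\exists\bar w\,\bigl(\bigwedge U(w_i)\wedge\psi\bigr)$ with $\psi$ quantifier-free over $L_{\ovs}$, together with the observation that such an existential collapses: for fixed parameters $\psi$ defines a semilinear subset of $V^{k}$, which one tests for a point of $W^{k}$ cell by cell; a full-dimensional cell always meets the dense set $W^{k}$, while a lower-dimensional cell lies in an affine subspace cut out by equations $\ell(\bar w)=d(x,y)$ with $\ell\colon V^{k}\to V^{t}$ surjective $K$-linear, and there it meets $W^{k}$ iff each of the finitely many $L_{\ovs}$-terms $d_j(x,y)$ lies in $W$ (because the image of $W^{k}$ under a surjective $K$-linear map $V^{k}\to V^{t}$ is $W^{t}$), i.e.\ iff finitely many $U$-atoms hold. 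Making this step clean --- and, if it is not already available from an earlier section, establishing near model completeness of $(V,W)$ in the first place --- is the main obstacle; everything afterwards is elementary.

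So fix $\phi(x;y)=\Phi(\alpha_1,\dots,\alpha_p,\beta_1,\dots,\beta_q)$ in this form, with $\Phi$ a Boolean function, the $\alpha_k$ linear comparisons and the $\beta_r$ $U$-atoms, and fix a finite $A=\{a^{(1)},\dots,a^{(n)}\}\subseteq V^{|x|}$. Write $\theta(A;b):=\{a\in A:(V,W)\models\theta(a;b)\}$ for a trace. The $\phi$-trace $\phi(A;b)$ depends only on the pair $\bigl((\alpha_k(A;b))_{k\le p},(\beta_r(A;b))_{r\le q}\bigr)$, so the number of distinct $\phi$-traces on $A$ is at most $N_\alpha\cdot N_\beta$, where $N_\alpha$ counts the distinct values of $b\mapsto(\alpha_k(A;b))_k$ and $N_\beta$ those of $b\mapsto(\beta_r(A;b))_r$; it is therefore enough to show $N_\alpha=O(n^{|y|})$ and $N_\beta=O(n^{|y|})$ with implied constants depending only on $\phi$. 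For $N_\alpha$: $\alpha_k(a^{(i)};b)$ reads ``$\nu_k\cdot b<c_{k,i}$'' with $\nu_k\in K^{|y|}$ and $c_{k,i}\in V$; choosing $\rho\le|y|$ of the $\nu_k$ spanning the rest, this trace is a function of the image of $b$ under $b\mapsto(\nu_k\cdot b)_{k\text{ in the chosen set}}\in V^{\rho}$, and the $pn$ conditions become affine inequalities on $V^{\rho}$; the number of cells cut in the ordered $K$-vector space $V^{\rho}$ by $O(n)$ affine hyperplanes is $O(n^{\rho})=O(n^{|y|})$, by the classical bound on arrangements, valid over any ordered field. For $N_\beta$: since $W$ is a $K$-subspace, $\beta_r(a^{(i)};b)$ reads ``$\kappa_r\cdot\bar b=v_{r,i}$'' in $V/W$, where $\kappa_r\in K^{|y|}$ is the $y$-coefficient vector of $\beta_r$, $\bar b=(b_1+W,\dots,b_{|y|}+W)\in(V/W)^{|y|}$, and $v_{r,i}\in V/W$ is fixed. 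Here the key point is purely linear-algebraic: for any $K$-vector space $D$, any vectors $\mu_t\in K^{|y|}$ and targets $c_t\in D$ with $t$ ranging over a set of size $N$, the number of patterns $\{t:\mu_t\cdot\bar b=c_t\}$ realized by $\bar b\in D^{|y|}$ is $O(N^{|y|})$ --- such a pattern $I$ is determined by any maximal $K$-independent subset $I_0\subseteq I$ of the $\mu_t$'s (so $|I_0|\le|y|$), because the equations indexed by $I_0$ pin down $\mu_t\cdot\bar b$ for every $t$ with $\mu_t\in\operatorname{span}\{\mu_s:s\in I_0\}$, hence decide membership in $I$ for all such $t$, while every $t$ with $\mu_t$ outside that span automatically lies outside $I$; so $I$ is a function of $I_0$, and there are at most $\sum_{k\le|y|}\binom Nk=O(N^{|y|})$ choices of $I_0$. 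Taking $D=V/W$ and $N=qn$ gives $N_\beta=O(n^{|y|})$. Multiplying, the number of $\phi$-traces on any finite $A\subseteq V^{|x|}$ is $O(|A|^{2|y|})$, so the VC-density of $\phi$ in $(V,W)$ is at most $2|y|$, as desired.
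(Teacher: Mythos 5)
Your argument is correct in substance and arrives at the right bound, but it follows a genuinely different route from the paper's. The paper proves full quantifier elimination for $T_{\ovs}^d$ (Theorem~\ref{QE-T^d_ovs}) --- so the normal form you labour over in your first step is available for free, and your sketch of collapsing the existentials over $U$ cell by cell can simply be replaced by a citation --- and then works entirely with formulas in a \emph{single} object variable: the atomic sets in one variable are points, initial segments, or cosets of $W$, this family has breadth $2$, hence every finite set of atomic formulas has UDTFS with $2$ parameters (Theorem~\ref{VC1-breadth}), hence $(V,W)$ has the VC $2$ property (Theorem~\ref{VC1-Boolean}), and the passage from one object variable to an arbitrary tuple $x$ and the conversion into the density bound $2|y|$ are both outsourced to Corollary 5.8 of \cite{VC-density-I} (Theorem~\ref{VC1-UDTFS->density}). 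You instead count traces directly for arbitrary $|x|$, factoring the trace of $\phi$ through the order-atom pattern and the $U$-atom pattern and bounding each by $O(n^{|y|})$; your linear-algebra lemma for the $U$-atoms in $V/W$ (a realized pattern is determined by a maximal independent subsystem of at most $|y|$ of its equations) is clean and correct, and multiplying the two counts legitimately yields $O(n^{2|y|})$. What your route buys is self-containedness on the combinatorial side --- no breadth, no UDTFS, no Corollary 5.8 --- and it makes visible \emph{why} the factor $2$ appears (order data versus data in $V/W$); what it costs is the order-atom count, where you invoke the classical hyperplane-arrangement bound ``valid over any ordered field.'' That is the one step needing care: the arrangement lives in $V^{\rho}$ for $V$ an ordered $K$-vector space, and the textbook deletion--restriction proof uses an intermediate-value/connectedness step that fails there (a $K$-segment joining points on opposite sides of a hyperplane $\mu\cdot z=c$ with $c\in V$ need not meet it). The bound is nonetheless true and is most easily obtained from the very results the paper already cites: the reduct $V\models T_{\ovs}$ is o-minimal, hence has the VC $1$ property, and encoding the index $k$ of the order atom as an extra object coordinate (a value among $1\cdot 1,\dots,p\cdot 1$) turns your $N_\alpha$ into a shatter-function value of a single $L_{\ovs}$-formula with parameter tuple $y$, which is $O((pn)^{|y|})$ by Theorem~\ref{VC1-UDTFS->density} applied in the reduct. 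With that substitution (or with a carefully reworked combinatorial argument for sign patterns of $K$-rational functionals on $V^{\rho}$), your proof is complete.
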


We prove that this is the best possible bound; more precisely, we show that there is a partitioned $L_{\ovs}^d$-formula $\phi(x;y)$, where both $x$ and $y$ are single variables such that the VC-density of $\phi$ is $2$. Actually, that formula allows us to show that no dense pair of o-minimal structures is dp-minimal. 

\medskip
The paper  \cite{VC-density-I}  relates VC-density with \emph{uniform definability of types over finite sets} as defined by Guingona in \cite{Guingona}; we restate this relation in the last section as Theorem~\ref{VC1-UDTFS->density}.  In the light of that, we actually prove the following stronger result.

\begin{theorem}\label{main_th}
The theory of dense pairs of ordered vector spaces over an ordered field $K$ has the VC 2 property.
\end{theorem}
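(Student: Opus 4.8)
Since the VC $2$ property is, by the results of \cite{VC-density-I} recalled as Theorem~\ref{VC1-UDTFS->density}, equivalent to a uniform-definability-of-types statement which in particular implies Theorem~\ref{main_doag}, this is what we establish: for every partitioned $L_{\ovs}^d$-formula $\phi(x;y)$ there is a single $L_{\ovs}^d$-formula (a ``defining scheme'') with $2|y|$ parameter slots that uniformly defines all $\phi$-types over finite sets; equivalently, for every finite set $A$ and every complete $\phi$-type over $A$, its trace on $A$ is cut out by an instance of that scheme with $2|y|$ parameters drawn from $A$.

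The first ingredient is a quantifier simplification for the pair: \emph{every $L_{\ovs}^d$-formula $\theta(\bar v)$ is equivalent, modulo the theory of dense pairs of ordered $K$-vector spaces, to a Boolean combination of quantifier-free $L_{\ovs}$-formulas and formulas $U(\ell(\bar v))$ with $\ell$ a $K$-affine form in $\bar v$}. Granting van den Dries's analysis of definable sets in dense pairs of o-minimal structures, the content here is purely linear-algebraic, because every definable function in an ordered $K$-vector space is piecewise $K$-affine: a bounded quantifier $\exists\bar w\,\bigl(\bigwedge_i U(w_i)\wedge\sigma(\bar w,\bar v)\bigr)$ over a polyhedron $\sigma(\cdot,\bar v)$ is controlled by density of $W^{|w|}$ in $V^{|w|}$ — a full-dimensional polyhedron always contains a point of $W^{|w|}$, whereas a lower-dimensional one meets $W^{|w|}$ exactly when its parameter-dependent affine hull is ``$W$-rational'', and by Gaussian elimination over $K$ the latter reduces to a finite conjunction of conditions $U(\ell(\bar v))$ together with polyhedral conditions on $\bar v$. (If this simplification is already available in the linear setting we simply quote it.)

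Fix now $\phi(x;y)$ with $|y|=m$ in this normal form, built from $L_{\ovs}$-formulas $\chi_1(x,y),\dots,\chi_s(x,y)$ and $U$-conditions $U(\ell_1(x,y)),\dots,U(\ell_r(x,y))$. A $\phi$-type over a finite set $A$ is determined by its \emph{o-minimal part} — the joint behaviour of the $\chi_j$'s — and its \emph{$U$-part} — the joint behaviour of the $U(\ell_i)$'s. For the o-minimal part, what matters is the $L_{\ovs}$-type of the relevant $m$-tuple over $A$ at the resolution of the finitely many $\chi_j$, and the behaviour of o-minimal theories (again from \cite{VC-density-I}) supplies a uniform definition using at most $m$ parameters from $A$. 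For the $U$-part, write $\ell_i(x,y)=\kappa_i(x)+\gamma_i(y)$ with $\kappa_i,\gamma_i$ $K$-linear; then $U(\ell_i(a,b))$ holds iff $\kappa_i(a)\equiv-\gamma_i(b)\pmod{W}$, so the $U$-part depends on the parameter $b$ only through the tuple $\bigl(\gamma_i(b)+W\bigr)_{i\le r}$, which ranges over the image of a fixed $K$-linear map $V^m\to(V/W)^r$, hence over a $K$-subspace of dimension at most $m$. Consequently the indices $i$ at which the congruence is witnessed somewhere in $A$ are handled by a maximal linearly independent family of such witnesses — at most $m$ elements of $A$ — which pins down $\bigl(\gamma_i(b)+W\bigr)_i$ as far as $A$ can detect, while at every remaining index the condition is false throughout $A$ and contributes nothing. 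Combining the o-minimal scheme ($m$ parameter slots), the congruence witnesses ($m$ parameter slots), and a bounded amount of definable bookkeeping into one defining scheme yields the VC $2$ property.

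The delicate point will be the $U$-part: one must handle on an equal footing the parameters $b$ for which no relevant congruence is witnessed in $A$ — where the $\phi$-type collapses to an $L_{\ovs}$-type — as well as the partially witnessed ones, producing a single scheme together with a single rule for selecting the $2m$ parameters that is correct in all these regimes and for every Boolean shape of $\phi$, all without the count ever exceeding $2m$ (rather than $2m+O(1)$). Getting the quantifier simplification into exactly the stated shape is the other step needing care. That $2|y|$ cannot be improved follows from the same analysis applied to $\phi(x;y):=(x<y)\wedge\neg U(x-y)$, where a parameter genuinely requires both an order witness and a $W$-coset witness from $A$; this is essentially the formula behind the claimed failure of dp-minimality as well.
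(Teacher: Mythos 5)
There is a genuine gap, and it is in the very first step: the statement you set out to prove is not the VC~2 property. The VC~2 property (as defined after Theorem~\ref{VC1-Boolean}) requires that \emph{every finite set} $\Delta(x;y)$ of formulas in a \emph{single object variable} has UDTFS with exactly $2$ parameters --- two tuples drawn from $B\subseteq M^{|y|}$ --- uniformly in $|y|$. Your defining scheme uses $2|y|$ parameters, a count that grows with $|y|$; this is strictly weaker for $|y|>1$ and does not yield the VC~2 property. Nor is it ``equivalent'' to Theorem~\ref{main_doag} via Theorem~\ref{VC1-UDTFS->density}: that theorem needs the constant-$2$ input on single-object-variable formulas precisely in order to produce the $2|y|$ density bound for formulas with arbitrarily many object variables, and a $2|y|$-parameter UDTFS for $\phi(x;y)$ bounds the \emph{dual} density of $\phi$, not the quantity in Theorem~\ref{main_doag}, so extra duality bookkeeping would be needed even for that weaker conclusion. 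The same over-counting appears inside the argument: the VC~1 property of o-minimal structures gives the ``o-minimal part'' with $1$ parameter, not $m$; and you yourself flag that assembling the $U$-part into a single scheme ``without the count ever exceeding $2m$'' is not done. Your quantifier simplification is fine (it is Theorem~\ref{QE-T^d_ovs} together with the shape of atomic $L_{\ovs}^d$-formulas), but everything after it needs to produce the constant $2$.

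The missing idea is the breadth argument. After quantifier elimination, every $L_{\ovs}^d$-formula in one object variable is a Boolean combination of formulas whose instances define points, initial segments, and cosets of $W$ in $V$. This family of sets has breadth $2$: two distinct cosets of $W$ are disjoint, initial segments are nested, and a point determines the intersection outright, so any nonempty finite intersection already equals the intersection of at most two of the sets (one interval or point, one coset). Lemma 5.2 of \cite{VC-density-I} (Theorem~\ref{VC1-breadth}) converts breadth $2$ directly into UDTFS with $2$ parameters for any finite subset of these atomic formulas --- with $2$ independent of $|y|$ --- and Corollary 5.6 (Theorem~\ref{VC1-Boolean}) then extends this to all finite sets of single-object-variable formulas, which is the VC~2 property. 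Without this step, or some other mechanism that caps the parameter count at an absolute constant, your approach cannot reach the stated theorem; if you want to salvage your linear-algebraic analysis of the $U$-part, it should be redirected at showing breadth $2$ (or UDTFS with $2$ parameters) for the family of cosets and intervals, rather than at a per-formula scheme with $2|y|$ slots.
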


The VC 2 property is a stronger version of the uniform definability of types over finite sets; we introduce it in the last section of the paper.

\medskip
The structure $(V,W)$ is an example of a \emph{dense pair of o-minimal structures} as defined in \cite{densepairs}. We recall the definition of this concept in Section~\ref{dense_pairs_section}, and we also mention some facts about dense pairs of o-minimal structures. 
One of these facts provides a characterization of definable sets in such pairs, stated as Theorem~\ref{definable_sets_dense_pairs} below. In the case of dense pairs of ordered vector spaces, we actually prove the following result.

\begin{theorem}\label{QE-T^d_ovs}
The theory of dense pairs of ordered vector spaces over an ordered field $K$ admits quantifier elimination.
\end{theorem}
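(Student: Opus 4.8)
The plan is to run the standard substructure‑completion test for quantifier elimination. Since the theory $T^d$ of dense pairs of ordered $K$‑vector spaces is consistent and complete by the general theory of dense pairs of o‑minimal structures (recalled in Section~\ref{dense_pairs_section}), it suffices to prove the following: for any $(V_1,W_1),(V_2,W_2)\models T^d$ with $(V_2,W_2)$ being $|V_1|^+$‑saturated, every $L_{\ovs}^d$‑embedding $f\colon\mathcal A\to(V_2,W_2)$ of an $L_{\ovs}^d$‑substructure $\mathcal A\subseteq(V_1,W_1)$ extends to an $L_{\ovs}^d$‑embedding $(V_1,W_1)\to(V_2,W_2)$. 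By the usual transfinite induction this reduces to the one‑element case: extend $f$ to the substructure $\mathcal A\langle a\rangle$ for a single $a\in V_1\setminus A$. I will use throughout that the $L_{\ovs}$‑reduct theory $T$ --- the complete theory of nontrivial ordered $K$‑vector spaces with a distinguished positive element $1$ --- has quantifier elimination and is o‑minimal, that $\dcl_{L_{\ovs}}(X)$ equals the $K$‑subspace generated by $X\cup\{1\}$, and that one‑variable $L_{\ovs}$‑formulas define finite unions of points and open intervals with endpoints in $\dcl_{L_{\ovs}}$ of the parameter set. Consequently an $L_{\ovs}^d$‑substructure $\mathcal A$ of $(V_1,W_1)$ is a $K$‑subspace of $V_1$ containing $1$, equipped with the trace $A\cap W_1$; and since $a\notin A$, the substructure $\mathcal A\langle a\rangle$ has underlying space $A\oplus Ka$, with $\alpha+\lambda a\in W_1$ (for $\lambda\ne 0$, $\alpha\in A$) if and only if $a\in W_1-\lambda^{-1}\alpha$.

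Suppose first that $a\in W_1+A$. Pick $\alpha_0\in A$ with $a-\alpha_0\in W_1$; since $\mathcal A\langle a\rangle=\mathcal A\langle a-\alpha_0\rangle$ we may rename $a-\alpha_0$ as $a$ and assume $a\in W_1\setminus A$, so that $\alpha+\lambda a\in W_1\iff\alpha\in W_1$ for all $\lambda\in K$. Let $p$ be the cut of $a$ over $A$, transported via $f$ to a cut over $f(A)$. The partial $L_{\ovs}^d$‑type over $f(A)$ asserting $U(x)$ together with ``$x$ realizes $p$'' is finitely satisfiable: for two consecutive parameters $c<c'$ occurring in it, density of $W_2$ in $V_2$ provides $w\in W_2$ with $c<w<c'$, and a one‑sided bound is handled similarly since $W_2$ is unbounded above and below. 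By saturation this type is realized by some $b\in W_2$. The map $\alpha+\lambda a\mapsto f(\alpha)+\lambda b$ is then an $L_{\ovs}^d$‑embedding of $\mathcal A\langle a\rangle$ extending $f$: it is an $L_{\ovs}$‑embedding because $A\oplus Ka$ is a direct sum and $b$ realizes the transported cut, and it respects $U$ in both directions, since $\lambda b\in W_2$ gives $f(\alpha)+\lambda b\in W_2\iff f(\alpha)\in W_2\iff\alpha\in W_1\iff\alpha+\lambda a\in W_1$.

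Suppose now that $a\notin W_1+A$. Then $\alpha+\lambda a\notin W_1$ for all $\lambda\ne 0$ and $\alpha\in A$ --- otherwise $a\in W_1-\lambda^{-1}\alpha\subseteq W_1+A$ --- so the $U$‑predicate of $\mathcal A\langle a\rangle$ coincides with that of $\mathcal A$. We now need $b\in V_2$ realizing the transported cut $p$ of $a$ over $f(A)$ and satisfying $b\notin W_2+f(A)$; that is, $b$ should realize the partial type consisting of $p$ together with $\neg U(x+c)$ for each $c\in f(A)$. Finite satisfiability of this type is equivalent to the assertion that $W_2$ together with finitely many elements of $f(A)$ never covers a nonempty open interval of $V_2$, equivalently that $\dcl_{L_{\ovs}}(W_2\cup F)=W_2+\langle F\rangle_K$ has empty interior for every finite $F\subseteq V_2$. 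Granting this, saturation yields $b$, and $\alpha+\lambda a\mapsto f(\alpha)+\lambda b$ is once more an $L_{\ovs}^d$‑embedding extending $f$; on the $U$‑side one only checks that $\mathcal A\langle a\rangle$ acquires no new elements of $W_1$ and that $b+c\notin W_2$ for $c\in f(A)$, both of which hold by the choice of $b$.

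The crux of the argument is this finite‑satisfiability step in the second case; everything else is bookkeeping with direct sums, order types, and the biconditional reflection of the predicate $U$. I would settle it by invoking the relevant density/genericity fact for dense pairs of o‑minimal structures recalled earlier --- namely that in a model $(V_2,W_2)$ of $T^d$ the set $W_2+\langle F\rangle_K$ has empty interior for every finite $F\subseteq V_2$, hence its complement meets every nonempty open interval --- rather than re‑deriving it from the axioms of $T^d$. The only other point demanding care is the legitimacy of the substructure test itself, which relies on the completeness of $T^d$, again part of the dense‑pairs material recalled in Section~\ref{dense_pairs_section}.
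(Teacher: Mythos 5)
Your proof is essentially correct, but it takes a genuinely different route from the paper. The paper does not run the saturated-embedding test at all: it first proves the freeness statement (Proposition~\ref{freeness_T_ovs^d}), feeds it into Corollary 2.7 of \cite{densepairs} to get model completeness, and then uses Corollary 2.8 of \cite{densepairs} (Corollary~\ref{precompleteness_dense_pairs}: any two models of $T^d_{\ovs}$ over a common substructure $(V,W)$ satisfy the same $L^d_{\ovs}(V)$-sentences). Quantifier elimination is then immediate: substructure completeness transfers the satisfiability of $\exists x\,\varphi(x,v)$ for quantifier-free $\varphi$ between the two models, which is exactly the standard one-variable QE criterion. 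Your back-and-forth argument is more self-contained and more informative (it exhibits the extensions explicitly, and the two cases $a\in W_1+A$ versus $a\notin W_1+A$ are exactly the freeness dichotomy the paper packages into Proposition~\ref{freeness_T_ovs^d}), whereas the paper's proof is three lines long because all the analytic work is outsourced to \cite{densepairs}. One caveat: the codensity fact you lean on in the second case --- that finitely many cosets of $W_2$ (or more generally $W_2+\langle F\rangle_K$) cannot cover a nonempty open interval --- is \emph{not} recalled anywhere in Section~\ref{dense_pairs_section}, contrary to what you assert; you would need to cite the small-sets lemma of \cite{densepairs} explicitly, or give the short direct argument (a finite union of cosets of $W_2$ lies in a proper dense subgroup in a model with $\dim_K(V_2/W_2)$ infinite, the complement of a proper dense subgroup is dense, and the statement is first-order, so completeness of $T^d_{\ovs}$ transfers it to all models). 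With that reference supplied, your argument is complete.
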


This result is a part of 5.8 in \cite{DMS_open_core}, but we present a proof for the sake of completeness. This quantifier elimination replaces a much more complicated description of definable sets proved in an earlier version of the paper. The advantage of the earlier description is that it has the potential to be correct in more general dense pairs of o-minimal structures. However, there is an obstruction to generalize Theorem~\ref{main_doag} to such pairs; at the end of the last section, we show that there are formulas with arbitrarily large VC-densities whose parameter variables have fixed length.


\bigskip\noindent
\emph{Notations and Conventions.} The set $\N$ of natural numbers contains $0$ and $\N_+=\N\setminus\{0\}$. We let the letters $m,n,k,l$ vary in $\Z$, and if $m$ is in $\N$ (or $\N_+$), we simply write $m\geq 0$ (or $m>0$).

\medskip\noindent
We let the letters $x,y,z$ denote tuples of variables, possibly with some decorations. In the same way, we do not use separate notations for elements and tuples of elements of sets. We make sure that this does not cause any confusion. One way to do this is to express the lengths of tuples using absolute value notation. 

\medskip\noindent
For a first-order language $L$, an $L$-structure $\Cal M$, and $A\subseteq M$, we say that a subset of $M^n$ is \emph{$L(A)$-definable} if it is defined by a formula of the form $\phi(x,a)$, where $\phi$ is an $L$-formula and $a$ is a tuple of elements of $A$; we refer to such a formula as an \emph{$L(A)$-formula}.  We write \emph{$L$-definable} and \emph{$L$-formula} instead of $L(\emptyset)$-definable and  $L(\emptyset)$-formula.  We also write \emph{definable in $\Cal M$} in the place of $L(M)$-definable.

\section{Quantifier Elimination}\label{dense_pairs_section}
Let $L\supseteq \{+,-,0,1,<\}$ be a language and $T$ a complete o-minimal $L$-theory extending the theory of ordered abelian groups with a distinguished positive element $1$.   Let $L^{d}:=L\cup \{U\}$, where $U$ is  a new unary relation symbol. A \emph{dense pair} of models of $T$ is an $L^d$-structure $(\mathcal{M},\dots,N)$, where $\mathcal{M}\models T$, $N$ is the universe of an $L^d$-structure $\mathcal{N}$, which is a proper elementary substructure of  $\mathcal{M}$, and $N$ is dense in $M$. We denote such a dense pair simply as $(\mathcal{M},\mathcal{N})$.

\medskip\noindent
It is clear that being a dense pair is expressible in $L^d$ and it is shown in \cite{densepairs} that the $L^d$-theory of such pairs is indeed complete; we denote that theory by $T^d$. The following result on definable sets is a by-product of the proof of completeness.

\begin{theorem}\label{definable_sets_dense_pairs}
Let $(\Cal M,\Cal N)$ be a model of $T^d$. Every $L^d$-definable subset of $M^n$ can be defined by a boolean combination of formulas of the form
   \[
   \exists y_1\cdots\exists y_m(U(y_1)\wedge \dots\wedge U(y_m)\wedge \phi(x_1,\dots,x_n,y_1,\dots,y_m)),
   \]
  where $\phi$ is an $L$-formula.
\end{theorem}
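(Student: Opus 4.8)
The plan is to prove the equivalent statement that $T^d$ eliminates quantifiers relative to the set $\Cal C$ of all finite boolean combinations of $L$-formulas and formulas of the form $\exists \bar y\,(U(y_1)\wedge\dots\wedge U(y_m)\wedge\phi(\bar x,\bar y))$ with $\phi$ an $L$-formula; note that $\Cal C$ already contains every atomic $L^d$-formula (an atom $U(t(\bar x))$ is $\exists y\,(U(y)\wedge y=t(\bar x))$) and is closed under boolean operations, so it suffices to show $\Cal C$ is closed, modulo $T^d$, under a single existential quantifier. I would establish this by a back-and-forth argument: given $\aleph_1$-saturated $(\Cal M_1,\Cal N_1),(\Cal M_2,\Cal N_2)\models T^d$ and tuples $\bar a_i\in M_i$ that satisfy the same $\Cal C$-formulas, build an isomorphism of the two pairs sending $\bar a_1$ to $\bar a_2$, which forces $(\Cal M_1,\Cal N_1,\bar a_1)\equiv(\Cal M_2,\Cal N_2,\bar a_2)$.

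First I would record the structural facts about a model $(\Cal M,\Cal N)\models T^d$: since $\Cal N\prec\Cal M$ one has $\dcl_{\Cal M}(N)=N$; $N$ is codense as well as dense in $M$; since $T$ is o-minimal over an ordered group it has definable Skolem functions, so every $\dcl$-closed subset of $\Cal M$ or of $\Cal N$ is the universe of an elementary $L$-substructure, $\acl=\dcl$ throughout, and a non-algebraic $1$-type over such a submodel is determined by the cut it fills; and $T$ carries the dimension-theoretic independence relation, so ``$A$ is $\dcl$-independent from $N$ over $A\cap N$'' is meaningful. The technical core is a \emph{trace lemma}: for every $L$-formula $\phi(\bar x,\bar y)$, the formula $\exists\bar y\in U\,\phi(\bar x,\bar y)$ is $T^d$-equivalent to a boolean combination of $L$-formulas in $\bar x$ and formulas $U(q(\bar x))$ with $q$ an $L$-definable function. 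I would prove this by induction, decomposing the fibre $\{\bar y:\phi(\bar a,\bar y)\}$ into o-minimal cells: a ``band'' cell over a lower cell $C'$ meets $N^{|\bar y|}$ iff $C'$ meets $N^{|\bar y|-1}$ at a point where its two bounding functions are strictly ordered, since every nonempty definable interval meets $N$ by density; a graph cell $\{(\bar y',h(\bar y')):\bar y'\in C'\}$ meets $N^{|\bar y|}$ iff $C'$ meets $N^{|\bar y|-1}$ at a point $\bar y'$ with $h(\bar y')\in N$, which after introducing a fresh $U$-variable $v=h(\bar y')$ is again of the treated form over a lower-dimensional set; and a $0$-cell contributes a boolean combination of $L$-formulas and formulas $U(q(\bar a))$.

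Then comes the back-and-forth. I would maintain a partial $L$-isomorphism $f\colon A\to A'$ between countable $\dcl$-closed subsets with $f(A\cap N_1)=A'\cap N_2$, with $A$ and $A'$ $\dcl$-independent from $N_1$ and $N_2$ over $A\cap N_1$ and $A'\cap N_2$ respectively, and such that $f$ preserves all $\Cal C$-facts; the hypothesis on $\bar a_1,\bar a_2$ supplies the base. To adjoin a new $c\in M_1$: if $c\in\dcl_{\Cal M_1}(N_1\cup A)$, first throw finitely many witnesses from $N_1$ into the domain, reducing to two cases. If $c\in N_1$, transport $\tp_L(c/A)$ by $f$; each of its formulas prefixed by ``$\exists\,\cdot\in U$'' is a preserved $\Cal C$-fact, so the transported type together with $U(x)$ is realized by some $c'\in N_2$, using that $\aleph_1$-saturation of the pair makes $\Cal N_2$ itself $\aleph_1$-saturated. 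If $c$ is generic over $N_1\cup A$, transport its cut over $A$ and realize it by some $c'\notin\dcl_{\Cal M_2}(N_2\cup A')$. In either case $c$ and $c'$ realize corresponding $L$-types over $A$ and $A'$; the independence bookkeeping together with $\acl=\dcl$ shows that the extension of $f$ to $\dcl_{\Cal M_1}(Ac)$ carries $\dcl_{\Cal M_1}(Ac)\cap N_1$ onto $\dcl_{\Cal M_2}(A'c')\cap N_2$; and the trace lemma, applied to each $\exists\bar y\in U\,\phi(\bar c_0,c,\bar y)$ with $\bar c_0\in A$, reduces it to $L$-facts about $(\bar c_0,c)$ — preserved since $f$ is an $L$-isomorphism on the enlarged domain — and facts $U(q(\bar c_0,c))$ — preserved because the $N$-traces of the two domains match. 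So the enlarged $f$ is still admissible, and symmetry completes the argument.

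The step I expect to be the main obstacle is the generic case of the trace lemma: for $c$ generic over $N$ and $L$-definable functions $q_1,\dots,q_r$ one must pin down which of $q_1(c),\dots,q_r(c)$ lie in $N$, and two such conditions need not be independent (intersections of dense codense sets can be empty). In a general o-minimal $T$ this is exactly where van den Dries's structural analysis is needed; in an ordered vector space the $q_i$ are affine over $\dcl(\bar x)$, two cosets of the $K$-subspace $N$ are either equal or disjoint, and density then determines the $N$-membership pattern outright — the extra uniformity that upgrades the result to the full quantifier elimination of Theorem~\ref{QE-T^d_ovs}.
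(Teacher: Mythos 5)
First, a point of reference: the paper does not prove this statement itself; it is quoted from \cite{densepairs}, where it falls out of the back-and-forth establishing completeness of $T^d$ (the ``freeness'' machinery recalled here as Theorem~\ref{dense_pairs_corollary_2_7}). Your overall strategy --- a back-and-forth between saturated pairs showing that agreement on the special formulas determines the $L^d$-type --- is the right one and is essentially van den Dries's. The genuine gap is the ``trace lemma'' you declare to be the technical core: the claim that $\exists\bar y\in U\,\phi(\bar x,\bar y)$ reduces to a boolean combination of $L$-formulas and formulas $U(q(\bar x))$ is false for general o-minimal $T$, and your induction for it is circular exactly where you suspect trouble. In the graph-cell case, rewriting ``$C'$ meets $N^{m-1}$ at a point with $h(\bar y')\in N$'' by introducing a fresh $U$-variable $v=h(\bar y')$ yields the condition ``the graph of $h$ over $C'$ meets $N^{m}$'' --- the very cell you started from, with the same number of $U$-quantified variables and the same dimension, so the induction makes no progress. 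No repair is possible in general: in a dense pair of real closed fields the formula $\exists y_1\exists y_2\,(U(y_1)\wedge U(y_2)\wedge x_1=y_1+y_2x_2)$ defines, for fixed $x_2=t\notin N$, the set $N+Nt$, and this is not a boolean combination of $L$-definable sets and sets $q^{-1}(N)$. Indeed, in a saturated pair choose $c,d\in N$ with $\operatorname{trdeg}(c,d/t)=2$ and then $d'$ realizing the same cut as $d$ over $\acl(c,t)$ with $d'\notin\dcl(N\cup\{t\})$; one checks that for such points $q(w)\in N$ holds if and only if $q$ is locally constant at $(w,t)$, which is an $L$-condition on $\operatorname{tp}_L(w/t)$, so $u=c+dt$ and $u'=c+d't$ satisfy exactly the same $L$-formulas over $t$ and the same conditions $U(q(\,\cdot\,,t))$, yet $u\in N+Nt$ while $u'\notin N+Nt$. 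This is precisely why the theorem must allow arbitrarily long blocks $\exists y_1\cdots\exists y_m$, and why the genuine quantifier elimination (Theorem~\ref{QE-T^d_ovs}) is proved in this paper only for ordered vector spaces, where, as you observe at the end, the relevant functions are affine and the reduction to $U(\text{term})$ does go through.

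The good news is that the trace lemma is also unnecessary for the theorem. In the correct argument the agreement of $\bar a_1$ and $\bar a_2$ on special formulas with \emph{arbitrary} $m$ is used directly: it says exactly which $L$-formulas $\phi(\bar a_i,\bar y)$ are realized in $N_i^{|\bar y|}$, which is what one needs to match $\dcl(\bar a_1\cup N_1)$ against $\dcl(\bar a_2\cup N_2)$ and to set up a common substructure over which $\Cal M$ and $\Cal N'$ are free; Theorem~\ref{dense_pairs_corollary_2_7} (or Corollary~\ref{precompleteness_dense_pairs}) then delivers elementarity without ever reducing the special formulas to the one-quantifier form. Your back-and-forth invariant should therefore be the structural one (partial $L$-isomorphism respecting $U$ plus freeness), with the special formulas consumed only at the base case; the step where you invoke the trace lemma to re-verify preservation of $\Cal C$-facts after each extension should be deleted rather than fixed.
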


\medskip\noindent
We now focus on the setting of dense pairs of ordered vector spaces. Let $K$ be an ordered field, and let $L_{\ovs}$ be the language defined in the Introduction. Also let $T_{\ovs}$ be the $L_{\ovs}$-theory of ordered vector spaces over $K$ with a distinguished positive element $1$. The theory $T_{\ovs}$ is well known to be o-minimal and to admit quantifier elimination.  

\medskip\noindent
We borrow some results from \cite{densepairs} to prove quantifier elimination for the theory $T_{\ovs}^d$. First, recall that the definable closure in an o-minimal structure gives rise to a pregeometry, which we denote by $\dcl$. We call the independence given by $\dcl$, the \emph{$\dcl$-independence}. Let $(\Cal M,\Cal N)$ be a substructure of a model $(\Cal M',\Cal N')$ of $T^d$. In \cite{densepairs}, the structures $\Cal M$ and $\Cal N'$ are defined to be \emph{free over} $\Cal N$ (\emph{in $\Cal M'$}) if any $A\subseteq M$ that is $\dcl$-independent (in $\Cal M'$) over $\Cal N$ remains $\dcl$-independent over $\Cal N'$. The next result is Corollary 2.7 of \cite{densepairs}.

\begin{theorem}\label{dense_pairs_corollary_2_7}
Let $(\Cal M,\Cal N),(\Cal M',\Cal N')\models T^d$ such that $(\Cal M,\Cal N)\subseteq (\Cal M',\Cal N')$ and $\Cal M$ and $\Cal N'$ are free over $\Cal N$. Then $(\Cal M,\Cal N)\preceq  (\Cal M',\Cal N')$
\end{theorem}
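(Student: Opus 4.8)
The plan is to prove elementarity via a back-and-forth argument, using the hypothesis that $\Cal M$ and $\Cal N'$ are free over $\Cal N$ to control the behavior of the predicate $U$ along the embedding. By the Tarski--Vaught test it suffices to show that whenever an $L^d$-formula $\exists v\,\psi(v,a)$ holds in $(\Cal M',\Cal N')$ with parameters $a$ from $M$, a witness can already be found in $M$. Since both pairs are models of the complete theory $T^d$, I would first reduce to understanding types: it is enough to show that the inclusion preserves $L^d$-types over $M$, which by completeness of $T^d$ and a standard compactness/saturation argument can be recast as the statement that every $L^d(M)$-type realized in $(\Cal M',\Cal N')$ is already consistent with the elementary diagram of $(\Cal M,\Cal N)$.

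First I would exploit the characterization of definable sets from Theorem~\ref{definable_sets_dense_pairs}: every $L^d$-formula is, modulo $T^d$, a boolean combination of formulas of the shape $\exists\,\bar y\,(U(y_1)\wedge\cdots\wedge U(y_m)\wedge\phi(x,\bar y))$ with $\phi$ an $L$-formula. This reduces the elementarity question for arbitrary $L^d$-formulas to tracking two things across the embedding: the $L$-elementary behavior of $\Cal M\subseteq\Cal M'$, and the behavior of the unary predicate, i.e.\ which tuples from $M$ lie in $N$ versus in $N'$. The $L$-part is immediate because $T$ is o-minimal with quantifier elimination (so $\Cal M\preceq\Cal M'$ as $L$-structures once one checks the substructure is in fact $L$-elementary, which again follows from o-minimal model completeness of $T$). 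The real content is the $U$-part.

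The heart of the argument is the freeness hypothesis, and this is where I expect the main obstacle to lie. The danger is that a witness $b\in M'$ to an existential $L^d$-formula might be forced to satisfy $U(b)$, i.e.\ $b\in N'$, and a priori there is no reason for the corresponding witness in $M$ to land inside $N$. To handle this I would argue as follows: given a point $c\in M$, the freeness of $\Cal M$ and $\Cal N'$ over $\Cal N$ says precisely that $\dcl$-independence over $N$ is preserved over $N'$; contrapositively, if $c\in M$ becomes $\dcl$-dependent on $N'$ in $\Cal M'$, then it was already $\dcl$-dependent on $N$ in $\Cal M$, hence $c\in\dcl(N)\subseteq N$ (using that $N$ is a model, so $\dcl$-closed). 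This gives the key exchange: for $c\in M$, one has $c\in N'$ iff $c\in N$, so the predicate $U$ restricted to $M$ is the same whether computed in $(\Cal M,\Cal N)$ or in $(\Cal M',\Cal N')$. I would then feed this compatibility, together with the $L$-elementarity of $\Cal M\subseteq\Cal M'$, into the normal-form description to verify the Tarski--Vaught criterion: for a witness $b\in N'$ to an existential statement over $a\in M$, density of $N'$ in $M'$ and the freeness-driven independence transfer let me move $b$ into $N$ while preserving the relevant $L$-type over $a$, producing a witness inside $(\Cal M,\Cal N)$. Assembling these pieces over all boolean combinations of normal-form formulas yields $(\Cal M,\Cal N)\preceq(\Cal M',\Cal N')$.
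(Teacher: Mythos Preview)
The paper does not give a proof of this theorem; it is quoted as Corollary~2.7 of \cite{densepairs}. In that source the result is obtained from a direct back-and-forth system (the lemmas preceding Corollary~2.7), \emph{before} completeness of $T^d$ and before the normal form for definable sets recorded here as Theorem~\ref{definable_sets_dense_pairs}. Your plan to invoke Theorem~\ref{definable_sets_dense_pairs} thus reverses the logical order of the original development: in \cite{densepairs} that description of definable sets is a by-product of the completeness proof, which in turn rests on the back-and-forth machinery underlying the very statement you are trying to prove.

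Setting the circularity aside, there is also a misidentification of where freeness does work. What you call the ``key exchange'' --- that for $c\in M$ one has $c\in N'$ iff $c\in N$ --- is immediate from $(\Cal M,\Cal N)$ being an $L^d$-substructure of $(\Cal M',\Cal N')$, since then $N=U^{(\Cal M,\Cal N)}=U^{(\Cal M',\Cal N')}\cap M=N'\cap M$; no freeness is needed. You do correctly derive the stronger fact $M\cap\dcl(N')\subseteq N$ from freeness, but then only use its trivial special case. The actual content, which your last paragraph waves at but does not supply, is the step ``move the witness while preserving the relevant $L$-type'': a witness $c\in M'$ to the outer existential need not lie in $N'$, and the inner witnesses $\bar y\in N'$ to the normal-form subformulas need not have counterparts in $N$ realizing the same $L$-type over the parameters from $M$. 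Showing that they do is precisely where the back-and-forth in \cite{densepairs} uses freeness, via a case analysis on whether new elements lie in $N'$, in $\dcl$ of what came before, or are independent; your outline does not contain this argument.
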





\medskip\noindent
We prove the following for models of $T_{\ovs}^d$.

\begin{proposition}\label{freeness_T_ovs^d}
Let $(V,W)$ be a substructure of a model $(V^{'},W^{'})$ of $T_{\ovs}^d$. Then $V$ and $W^{'}$ are free over $W$ in $V^{'}$.
\end{proposition}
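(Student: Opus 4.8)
The plan is to prove freeness by unwinding the definitions in the concrete setting of ordered vector spaces, where $\dcl$-independence has a very clean linear-algebraic meaning. First I would recall that in an ordered vector space over $K$, the pregeometry $\dcl$ coincides with linear independence over $K$ together with the constant $1$; more precisely, for $A \subseteq M$ and $b \in M$, one has $b \in \dcl(A)$ if and only if $b$ is a $K$-linear combination of elements of $A \cup \{1\}$. Consequently, a set $A \subseteq V$ is $\dcl$-independent over $W$ (in $V'$) exactly when $A$ is linearly independent over the subfield-like structure generated by $W$ — that is, no nontrivial $K$-linear combination of elements of $A$ lies in the $K$-subspace $W + K\cdot 1 = W$ (note $1 \in W$). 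So the statement I must prove translates to: if $A \subseteq V$ and no nonzero $K$-linear combination from $A$ lies in $W$, then no nonzero $K$-linear combination from $A$ lies in $W'$.

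The key step is then the following purely algebraic observation about the inclusion $(V,W) \subseteq (V',W')$: since this is an $L_{\ovs}^d$-substructure inclusion, we have $W = W' \cap V$ as sets, and both $W \subseteq V$ and $W' \subseteq V'$ are $K$-subspaces. I would argue by contraposition. Suppose some nonzero linear combination $\sum_{i} \lambda_i a_i$ with $a_i \in A \subseteq V$ and $\lambda_i \in K$ lies in $W'$. But $\sum_i \lambda_i a_i$ is an element of $V$, because $V$ is closed under the vector space operations and $\ell_{\lambda_i}$. Hence $\sum_i \lambda_i a_i \in W' \cap V = W$, which contradicts the $\dcl$-independence of $A$ over $W$ in $V'$ (equivalently in $V$, since $\dcl$-independence is absolute for models, but here we only need the ambient computation in $V'$). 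This shows $A$ remains $\dcl$-independent over $W'$, which is precisely the assertion that $V$ and $W'$ are free over $W$ in $V'$.

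I would be a little careful about one subtlety: the definition of freeness in Theorem~\ref{dense_pairs_corollary_2_7} quantifies over $A \subseteq M$ that is $\dcl$-independent over $\Cal N$, and asks that such $A$ stay $\dcl$-independent over $\Cal N'$. The argument above handles exactly this, using that $\dcl$-independence in an o-minimal ordered vector space is linear independence modulo the smaller set. I should also note that $\dcl$ computed in $\Cal N'$ and in $\Cal M'$ agree on subsets of $M$ when $\Cal N' \preceq \Cal M'$, so there is no ambiguity in "over $W'$" versus "over $W'$ in $V'$."

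The main obstacle, such as it is, is purely expository rather than mathematical: one must state cleanly the identification of $\dcl$ with $K$-linear span (including the rôle of $1$), and make sure the substructure hypothesis genuinely delivers $W = W' \cap V$ rather than merely $W \subseteq W'$ — this is where being an $L_{\ovs}^d$-substructure (so that $U$ is interpreted compatibly) is used. Once that is in place, the rest is a one-line linear-algebra contradiction. I do not expect to need anything beyond quantifier elimination for $T_{\ovs}$ (to justify the description of $\dcl$) and the definitions imported from \cite{densepairs}.
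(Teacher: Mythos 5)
Your proposal is correct and follows essentially the same route as the paper: both arguments reduce $\dcl$-independence to $K$-linear independence modulo the subspace, and both hinge on the observation that a witnessing linear combination lies in $V\cap W'=W$, contradicting independence over $W$. The only cosmetic difference is that the paper solves for one of the $v_i$ and locates the offending element $w_1$ of $W'\setminus W$ inside $V$, whereas you phrase the same contradiction in terms of the linear combination itself landing in $W'\cap V$.
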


\begin{proof}
Note that $V\cap W^{'}=W$ and the case where $W^{'}= W$ is trivial. We may assume then $W^{'}\setminus W\neq \emptyset$.

\medskip\noindent
Let $v_1,\ldots,v_{n}\in V$ be dcl-independent over $W$, and suppose they are not dcl-independent over $W^{'}$. Without loss of generality, there exist $\lambda_1,\ldots,\lambda_{n-1}\in K$ and $w_1\in W^{'}$ such that
$v_n= \sum_{i=1}^{n-1} \lambda_iv_i +w_1.$
Since $v_n\in V$ is dcl-independent over $W$, we have
 \[ v_n\notin \bigoplus_{i=1}^{n-1} Kv_i +W.\]
Combining the last two observations yields $w_1\in W^{'}\setminus W$. We have on the other hand that $w_1\in V$, contradicting $w_1\notin V\cap W^{'}=W$.
\end{proof}

\medskip\noindent
A direct consequence of Theorem~\ref{dense_pairs_corollary_2_7} and Proposition~\ref{freeness_T_ovs^d} is as follows.

\begin{corollary}\label{modelcompleteness_T^d_ovs}
    The theory $T_{\ovs}^d$ of ordered vector spaces over $K$ is model complete.
\end{corollary}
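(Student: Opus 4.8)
The plan is to deduce model completeness from Theorem~\ref{dense_pairs_corollary_2_7} together with Proposition~\ref{freeness_T_ovs^d}. Recall that a theory is model complete precisely when every embedding between its models is elementary. So let $(V,W)$ and $(V',W')$ be models of $T_{\ovs}^d$ with $(V,W)\subseteq (V',W')$; I must show $(V,W)\preceq (V',W')$. By Theorem~\ref{dense_pairs_corollary_2_7} (applied to $T=T_{\ovs}$), it suffices to check that $V$ and $W'$ are free over $W$ in $V'$. But this is exactly the content of Proposition~\ref{freeness_T_ovs^d}, whose hypothesis — that $(V,W)$ is a substructure of a model $(V',W')$ of $T_{\ovs}^d$ — is satisfied here. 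Hence $(V,W)\preceq (V',W')$, and $T_{\ovs}^d$ is model complete.

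There is one point of hygiene worth flagging: Theorem~\ref{dense_pairs_corollary_2_7} is stated for pairs $(\Cal M,\Cal N),(\Cal M',\Cal N')\models T^d$, so strictly I am invoking it with $T=T_{\ovs}$ and the corresponding $L^d=L_{\ovs}^d$. This is legitimate because $T_{\ovs}$ is a complete o-minimal $L_{\ovs}$-theory extending the theory of ordered abelian groups with a distinguished positive element $1$ — precisely the standing hypothesis on $T$ in Section~\ref{dense_pairs_section} — so $T_{\ovs}^d$ is an instance of $T^d$ and all the results of that section apply verbatim.

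I do not anticipate any real obstacle: the corollary is a formal two-line consequence once Proposition~\ref{freeness_T_ovs^d} is in hand, and indeed the excerpt already advertises it as "a direct consequence." The only care needed is to state explicitly that model completeness is equivalent to every embedding of models being elementary, and then to observe that an arbitrary embedding $(V,W)\hookrightarrow (V',W')$ identifies $(V,W)$ with a substructure of $(V',W')$, so Proposition~\ref{freeness_T_ovs^d} applies to it. The genuinely substantive work — the freeness computation — has already been carried out in the proposition.
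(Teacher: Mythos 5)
Your argument is correct and is exactly the one the paper intends: the corollary is stated there as a direct consequence of Theorem~\ref{dense_pairs_corollary_2_7} combined with Proposition~\ref{freeness_T_ovs^d}, which is precisely the deduction you carry out. Your added remarks about instantiating $T=T_{\ovs}$ and identifying an embedding with a substructure inclusion are sound bookkeeping and do not change the approach.
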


\medskip\noindent
The following is a special case of Corollary 2.8 of \cite{densepairs}.

\begin{corollary}\label{precompleteness_dense_pairs}
    Let $(V,W)$ be a common substructure of models $(V^{'},W^{'})$ and
$(V^{''},W^{''})$ of $T^d_{\ovs}$. 
Then $(V^{'},W^{'})$ and $(V^{''},W^{''})$ are elementarily equivalent over $V$, that is, they satisfy the same $L^{d}_{\ovs}(V)$-sentences.
\end{corollary}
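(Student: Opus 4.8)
The plan is to deduce Corollary~\ref{precompleteness_dense_pairs} essentially formally from what has already been assembled, mirroring the structure of Corollary~2.8 of \cite{densepairs}. First I would pass to a monster model: fix a large saturated model $(V^*,W^*)\models T^d_{\ovs}$, and embed both $(V',W')$ and $(V'',W'')$ into it over the common substructure $(V,W)$. The point of saturation is that the two copies of $(V,W)$ sitting inside $(V^*,W^*)$ can be arranged to coincide, so that $(V',W')$ and $(V'',W'')$ become two $L^d_{\ovs}$-substructures of $(V^*,W^*)$ sharing the substructure $(V,W)$.

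Next I would invoke Corollary~\ref{modelcompleteness_T^d_ovs}: since $T^d_{\ovs}$ is model complete, the inclusions $(V',W')\preceq (V^*,W^*)$ and $(V'',W'')\preceq (V^*,W^*)$ are elementary. Hence for any $L^d_{\ovs}(V)$-sentence $\sigma$ — recall the parameters all lie in the common part $V$ — we have
\[
(V',W')\models\sigma \iff (V^*,W^*)\models\sigma \iff (V'',W'')\models\sigma,
\]
which is exactly the claimed elementary equivalence over $V$. The only subtlety to be careful about is that model completeness is a statement about $L^d_{\ovs}$-substructures, and the parameters from $V$ are named by constants when we expand to $L^d_{\ovs}(V)$; but since $V\subseteq V'$ and $V\subseteq V''$ and these inclusions are $L^d_{\ovs}$-substructure inclusions (not merely $L_{\ovs}$-substructure inclusions, i.e. $U$ is interpreted compatibly because $(V,W)$ is a substructure of each as an $L^d_{\ovs}$-structure), the constants get the same interpretation on both sides and the equivalence above is legitimate.

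The main obstacle — really the only place any work is needed — is the amalgamation step: one must check that $(V',W')$ and $(V'',W'')$ can actually be embedded over $(V,W)$ into a common model of $T^d_{\ovs}$. For this I would appeal again to Corollary~\ref{modelcompleteness_T^d_ovs} together with Proposition~\ref{freeness_T_ovs^d}: model completeness of $T^d_{\ovs}$ gives that $T^d_{\ovs}$ together with the atomic diagram of $(V,W)$ (in $L^d_{\ovs}(V)$) is, after naming, a consistent theory each of whose models contains $(V,W)$ as a substructure; since $(V',W')$ and $(V'',W'')$ are both models of $T^d_{\ovs}$ containing $(V,W)$, a standard compactness argument (joint embedding over $(V,W)$ into a $|V'|+|V''|$-saturated model) produces the desired common extension. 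Alternatively, one can simply cite Corollary~2.8 of \cite{densepairs} directly, since the hypotheses of that corollary are met here: that is in fact what the statement says it is a special case of, and Proposition~\ref{freeness_T_ovs^d} is precisely the ingredient that verifies the freeness hypothesis needed to apply it in the ordered-vector-space setting.
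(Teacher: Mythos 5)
Your fallback option --- citing Corollary~2.8 of \cite{densepairs} directly, with Proposition~\ref{freeness_T_ovs^d} supplying the freeness hypothesis --- is exactly what the paper does; it offers no proof beyond that citation, and that route is correct.

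Your primary argument, however, has a genuine gap at the amalgamation step. Model completeness of $T^d_{\ovs}$ does \emph{not} yield amalgamation over arbitrary substructures, and the compactness argument you sketch begs the question: to embed $(V',W')$ over $(V,W)$ into a suitably saturated elementary extension of $(V'',W'')$, you must verify that the diagram of $(V',W')$ over $(V,W)$ is finitely satisfiable in $(V'',W'')$, i.e.\ that every existential $L^d_{\ovs}(V)$-sentence true in $(V',W')$ holds in $(V'',W'')$ --- which is (a special case of) the very corollary being proved. The consistency of $T^d_{\ovs}$ together with the atomic diagram of $(V,W)$ gives you nothing here, since both models already witness that consistency separately; what is needed is a \emph{joint} consistency over the identified copy of $(V,W)$. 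Indeed, for a model complete theory, the amalgamation property over substructures is equivalent to quantifier elimination, so if it followed from model completeness by compactness alone, every model complete theory would eliminate quantifiers; the theory of real closed fields in the ring language (without the order) is a standard counterexample, where two copies of $\mathbb{R}$ over the common subring $\Z[x]/(x^2-2)$ disagree about whether the image of $x$ is a square. The actual content in the present setting is Theorem~2.5/Corollary~2.8 of \cite{densepairs}, whose freeness hypothesis is what Proposition~\ref{freeness_T_ovs^d} verifies; once a common extension is granted, the rest of your argument (elementary embeddings into it by Corollary~\ref{modelcompleteness_T^d_ovs}, hence agreement on all $L^d_{\ovs}(V)$-sentences) is fine.
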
 


\medskip\noindent
We are now ready to prove the quantifier elimination result. 

\begin{proof}[Proof of Theorem~\ref{QE-T^d_ovs}]
    Let $(V^{'},W^{'})$ and $(V^{''},W^{''})$ be two models of $T_{\ovs}^d$ with a common substructure $(V,W)$. Let also $\varphi(x,y)$ be a quantifier free $L_{\ovs}^d$-formula where $x$ denotes a single variable and $y$ denotes any tuple of free variables. If $(V^{'},W^{'})\models \varphi(a,v)$ for $v\in V^{|y|}$ and $a\in V^{'}$, then by Corollary~\ref{precompleteness_dense_pairs} we have $(V^{''},W^{''})\models \varphi(b,v)$ for some $b\in V^{''}$.
Hence $T_{\ovs}^d$ has quantifier elimination.
\end{proof}

\section{VC-density and UDTFS }
Below, $\Cal M$ is a structure in a language $L$ and its theory is denoted by $T$. We deal with \emph{partitioned $L$-formulas} $\phi( x; y)$, which means that the set of free variables of $\phi$ are partitioned into two: \emph{object variables $ x=(x_1,\dots,x_m)$} and \emph{parameter variables $ y=(y_1,\dots,y_n)$}.

\medskip\noindent
For a subset $A\subseteq M^m$, we let $A\cap\phi(\Cal M;y)$ denote the collection 
 \[
  \big\{A\cap\phi(\Cal M;b):b\in M^n\big\}.
 \]
If, for instance, $A$ is finite with $k$ elements, then $A\cap\phi(\Cal M;y)$ has at most $2^k$ members. 
 
\medskip\noindent
The \emph{shatter function} $\pi_\phi:\N\to\N$ of $\phi(x;y)$ is defined as follows: 
 \[
   \pi_\phi(k):=\max\big\{|A\cap\phi(\Cal M;y)|:A\subseteq M^m, |A|=k\big\}.
 \]
If there is $k\geq 0$ with $\pi_\phi(k)<2^k$, then the \emph{VC-dimension} of $\phi(x;y)$ is the largest $d$ such that $\pi_\phi(d)=2^d$. Otherwise we say that the VC-dimension of $\phi(x;y)$ is infinity. 

\medskip\noindent
A result proved independently by  Sauer (in \cite{Sauer}) and Shelah (in \cite{Shelah-VC-duality}) states that if the VC-dimension of $\phi(x;y)$ is finite, say $d$, then the shatter function $\pi_\phi$ is bounded by a polynomial of $k$ of degree $d$. This allows us to define the \emph{VC-density} of $\phi(x;y)$ to be the infimum of the real numbers $r\in\R$ such that $\pi_\phi(k)=O(k^r)$.

\medskip\noindent
We do not actually calculate the VC-density of formulas in the dense pairs of ordered vector spaces. We use its relation with the concept of the definability of types as defined below.

\medskip\noindent
Let $\Delta( x; y)$ be a finite set of partitioned $L$-formulas; let $m=| x|$ and $n=| y|$. Given a subset $B\subseteq M^n$, a set of $L$-formulas that are  either of the form $\phi( x; b)$ or of the form $\neg\phi( x; b)$, where $\phi\in\Delta$ and $ b\in B$ is called a \emph{$\Delta$-type over $B$} if it is maximally consistent in $T$. (In many sources this is called a \emph{complete} $\Delta$-type.) We denote the set of $\Delta$-types over $B$ as $S^\Delta(B)$.

\begin{definition}
A finite set $\Delta( x; y)$ has \emph{uniform definability of types over finite sets (UDTFS) with $d$ parameters (in $\Cal M$)} if there are collections 
 \[
  (\phi^1( y; y_1,\dots, y_d))_{\phi\in\Delta},\dots,(\phi^r( y; y_1,\dots, y_d))_{\phi\in\Delta}
 \] 
 of $L$-formulas such that for every finite $B\subseteq M^n$ and $p\in S^\Delta(B)$ there are $i\in\{1,\dots,r\}$ and $ b_1,\dots, b_d\in B$ such that for every $ b\in B$
 \[
 \phi( x;b)\in p\Longleftrightarrow \Cal M\models\phi^i( b; b_1,\dots, b_d).
 \] 
\end{definition}

\medskip\noindent
The collections $ (\phi^1( y; y_1,\dots, y_d))_{\phi\in\Delta},\dots,(\phi^r( y; y_1,\dots, y_d))_{\phi\in\Delta}$ as in this definition are said to \emph{define $\Delta$}.

\medskip\noindent
It is shown in \cite{Chernikov-Simon-2} that in any structure with NIP, every singleton $\{\phi( x; y)\}$ has UDTFS with $d$ parameters for some $d$. Using Corollary 3.2 of  \cite{dependent}, the same holds for any dense pair of models of an o-minimal theory extending the theory of divisible ordered abelian groups as in Section 3. We are aiming to show that in a pair $(V,W)$ of models of $T_{\ovs}$, any finite $\Delta(x; y)$ with a single object variable actually has UDTFS with $2$ parameters. Quantifier elimination of $T_{\ovs}^d$ enables us to use the relation of UDTFS with another notion, the property of
\textit{breadth} introduced in \cite{VC-density-I} as follows.

\medskip\noindent
A collection $\mathcal{S}$ of sets is said to have \textit{breadth $d$} if for any $A_i\in \mathcal{S}$ with $\bigcap_{i\in I} A_i\neq \emptyset$ and $d\leq |I|<\infty$, there is a subset $J\subseteq I$ with $|J|=d$ and $\bigcap_{i\in I} A_i=\bigcap_{j\in J} A_j$. The following result shows how small breadth guarantees UDTFS with finitely many parameters. 

\begin{theorem}[Lemma 5.2 of \cite{VC-density-I}]\label{VC1-breadth}
Let $\Delta(x;y)$ be a finite set of partitioned formulas and $\mathcal{S}_\Delta$ be the collection of subsets of $M^{|x|}$ defined by the formulas from $\Delta(x;y)$, where the parameters range over $M^{|y|}$. If $\mathcal{S}_\Delta$ has breadth $d$, then $\Delta(x;y)$ has UDTFS with $d$ parameters. 
\end{theorem}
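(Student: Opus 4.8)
The plan is to realize each $\Delta$-type as a point in a suitable extension and use breadth to cut its ``positive part'' down to $d$ formulas, which then directly produce the defining schemata. Concretely, for each tuple $\vec\phi=(\phi_1,\dots,\phi_d)\in\Delta^{\,d}$ and each $\psi\in\Delta$ I set
\[
\psi^{\vec\phi}(y;\,y_1,\dots,y_d)\;:=\;\forall x\Big(\bigwedge_{j=1}^{d}\phi_j(x;y_j)\to\psi(x;y)\Big),
\]
an $L$-formula asserting that $\bigcap_{j=1}^{d}\phi_j(\mathcal M;y_j)\subseteq\psi(\mathcal M;y)$. The claim will be that the finitely many collections $\big(\psi^{\vec\phi}(y;y_1,\dots,y_d)\big)_{\psi\in\Delta}$, indexed by $\vec\phi\in\Delta^{\,d}$ (tacitly enlarging $\Delta$ by one trivially true formula to allow padding), define $\Delta$ with $d$ parameters.

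First I would observe that ``$\mathcal S_\Delta$ has breadth $\le d$'' is preserved under passing to elementary extensions: it is equivalent to the conjunction, over all $n\ge d$ and all $\phi_1,\dots,\phi_n\in\Delta$, of the first-order sentence saying that whenever $\bigwedge_{i}\phi_i(x;b_i)$ is consistent, its solution set coincides with that of some $d$-element subconjunction. So, fixing a finite $B\subseteq M^{|y|}$ and $p\in S^\Delta(B)$, I realize $p$ by some $a$ in an elementary extension $\mathcal M^\ast\succeq\mathcal M$, where breadth $\le d$ still holds. The finite family $\mathcal F:=\{\psi(\mathcal M^\ast;b):\psi\in\Delta,\ b\in B,\ \psi(x;b)\in p\}$ has nonempty intersection, since $a$ lies in every member. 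If $|\mathcal F|>d$, breadth gives $d$ of its members $\phi_1(\mathcal M^\ast;b_1),\dots,\phi_d(\mathcal M^\ast;b_d)$ with $b_1,\dots,b_d\in B$ and $\bigcap_{j=1}^{d}\phi_j(\mathcal M^\ast;b_j)=\bigcap\mathcal F$; if $|\mathcal F|\le d$ I take all of $\mathcal F$ and pad the list to length $d$ with the trivially true formula. Either way I obtain $\vec\phi=(\phi_1,\dots,\phi_d)$ and $b_1,\dots,b_d\in B$ with $\bigcap_{j=1}^{d}\phi_j(\mathcal M^\ast;b_j)=\bigcap\mathcal F$.

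Then I verify the UDTFS equivalence for this $\vec\phi$ and these $b_1,\dots,b_d$. If $\psi(x;b')\in p$, then $\psi(\mathcal M^\ast;b')\in\mathcal F$, so $\bigcap_j\phi_j(\mathcal M^\ast;b_j)\subseteq\psi(\mathcal M^\ast;b')$, i.e.\ $\mathcal M^\ast\models\psi^{\vec\phi}(b';b_1,\dots,b_d)$. Conversely, if $\psi(x;b')\notin p$, then $\neg\psi(x;b')\in p$ by maximality, so $a\in\bigcap_j\phi_j(\mathcal M^\ast;b_j)$ while $a\notin\psi(\mathcal M^\ast;b')$, whence $\mathcal M^\ast\not\models\psi^{\vec\phi}(b';b_1,\dots,b_d)$. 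Since all the parameters $b',b_1,\dots,b_d$ lie in $M$ and $\psi^{\vec\phi}$ is an $L$-formula, $\mathcal M^\ast\models\psi^{\vec\phi}(b';b_1,\dots,b_d)$ iff $\mathcal M\models\psi^{\vec\phi}(b';b_1,\dots,b_d)$ by elementarity, which is exactly the required equivalence $\psi(x;b')\in p\Leftrightarrow\mathcal M\models\psi^{\vec\phi}(b';b_1,\dots,b_d)$ for all $b'\in B$.

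The one step I expect to need genuine care is the passage to the elementary extension: one must ensure the positive part of $p$ has a nonempty solution set there (so that breadth is applicable) and that breadth, being an infinitary conjunction of first-order sentences, is in fact inherited by $\mathcal M^\ast$. Everything else — the padding needed to always reach exactly $d$ parameters, the bookkeeping that the chosen parameters stay inside $B$, and the fact that there are only finitely many schemata (at most $(|\Delta|+1)^d$) — is routine.
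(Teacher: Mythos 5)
The paper does not prove this statement --- it is imported verbatim as Lemma 5.2 of \cite{VC-density-I} --- so there is no in-paper proof to compare against. Your argument is correct and is essentially the standard proof from the cited source: define the schemata $\forall x\big(\bigwedge_{j}\phi_j(x;y_j)\to\psi(x;y)\big)$, use breadth to replace the positive part of a $\Delta$-type by $d$ of its instances with the same intersection, and read off the defining formulas; you also correctly handle the two genuinely delicate points, namely that breadth is an elementary property (a scheme of first-order sentences, hence inherited by $\mathcal M^\ast$) and that padding by a trivially true formula is needed when the positive part has fewer than $d$ instances.
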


\medskip\noindent
We continue with the following, which is Corollary 5.6 in \cite{VC-density-I}.

\begin{theorem}\label{VC1-Boolean}
Let $\Phi$ be a set of partitioned formulas in a single object variable $x$ such that 
 \begin{itemize}
  \item every partitioned formula with object variable $x$ is equivalent in $T$ to a boolean combination of formulas from $\Phi$. 
  \item every finite subset of  $\Phi$ has UDTFS with $d$ parameters.
 \end{itemize}
Then every finite set of $L$-formulas with a single object variable has UDTFS with $d$ parameters.
\end{theorem}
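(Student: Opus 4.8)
The plan is to reduce an arbitrary finite set $\Delta(x;y)$ of $L$-formulas (with single object variable $x$) to one fixed finite subset of $\Phi$ and then transport its UDTFS, keeping the number of parameters equal to~$d$; this goes directly rather than through the breadth criterion of Theorem~\ref{VC1-breadth}, which is only a one-way implication. First I would fix, for each $\phi\in\Delta$, a witness to the first hypothesis: finitely many $\psi_{1},\dots,\psi_{s}\in\Phi$, tuples $\sigma$ of $L$-terms in the variables $y$ (in particular, possibly sub-tuples of $y$), and a Boolean function $\beta_\phi$ with
\[
\phi(x;y)\ \equiv_T\ \beta_\phi\big((\psi_j(x;\sigma(y)))_{(j,\sigma)}\big).
\]
As $\Delta$ is finite, only finitely many $\psi_j$ and finitely many term-tuples $\sigma$ occur overall; collect the former into a finite $\Phi_0\subseteq\Phi$ (after harmlessly padding parameter tuples to a common length $n'$) and the latter into $\Sigma=\{\sigma_1,\dots,\sigma_t\}$. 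By the second hypothesis $\Phi_0$ has UDTFS with $d$ parameters; fix collections $(\psi^1)_{\psi\in\Phi_0},\dots,(\psi^r)_{\psi\in\Phi_0}$ that define it.

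Given a finite $B\subseteq M^{|y|}$ and $p\in S^\Delta(B)$, I would put $B':=\{\sigma_l^{\Cal M}(b):1\le l\le t,\ b\in B\}\subseteq M^{n'}$, realize $p$ by an element $a$ in an elementary extension $\Cal N\succeq\Cal M$, and let $q:=\tp^{\Phi_0}(a/B')\in S^{\Phi_0}(B')$. The point is that $q$ decides $p$: for $\phi\in\Delta$ and $b\in B$ each $\sigma^{\Cal M}(b)$ lies in $B'$, so whether $\phi(x;b)\in p$ is $\beta_\phi$ evaluated at the truth values of the statements ``$\psi_j(x;\sigma(y))\in q$''. Applying the UDTFS of $\Phi_0$ to $q$ produces $i\le r$ and $b'_1,\dots,b'_d\in B'$ with $\psi(x;b')\in q\iff\Cal M\models\psi^i(b';b'_1,\dots,b'_d)$ for all $\psi\in\Phi_0$ and $b'\in B'$. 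Writing $b'_k=\sigma_{l_k}^{\Cal M}(c_k)$ with $c_k\in B$ and $l_k\le t$, I would set, for each $i\le r$ and $\bar l=(l_1,\dots,l_d)\in\{1,\dots,t\}^d$,
\[
\phi^{i,\bar l}(y;y_1,\dots,y_d)\ :=\ \beta_\phi\big(\big(\psi_j^i(\sigma(y);\sigma_{l_1}(y_1),\dots,\sigma_{l_d}(y_d))\big)_{(j,\sigma)}\big),
\]
an $L$-formula obtained by substituting $L$-terms into the $L$-formulas $\psi_j^i$. Chasing the equivalences gives $\phi(x;b)\in p\iff\Cal M\models\phi^{i,\bar l}(b;c_1,\dots,c_d)$ for every $b\in B$, so the $r\cdot t^{d}$ collections $(\phi^{i,\bar l})_{\phi\in\Delta}$ define $\Delta$ with $d$ parameters, which is what is wanted. (Nothing here uses that $x$ is a single variable; that hypothesis is present only because the first assumption on $\Phi$ is typically available precisely for single object variables.)

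I expect the routine part to be the bookkeeping with the term-tuples, and the step needing genuine care to be the passage from the purely syntactic object $p$ to an honest complete $\Phi_0$-type $q$ carrying a realization: this forces one into an elementary extension, requires checking $q\in S^{\Phi_0}(B')$, and then requires pulling the defining parameters $b'_k\in B'$ back along the term-tuples to parameters $c_k\in B$. The conceptual content is simply that UDTFS-style definability is preserved under Boolean combinations and under substitution of $L$-terms into parameter variables, at the cost of multiplying the number of defining collections by~$t^{d}$ but \emph{without increasing the parameter count}~$d$.
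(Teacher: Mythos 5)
The paper does not actually prove this statement---it is imported verbatim as Corollary 5.6 of \cite{VC-density-I}---so there is no in-paper argument to compare against; your proposal is correct and is essentially the standard argument behind that corollary. The two points you flag as needing care (realizing the syntactic type $p$ in an elementary extension to extract an honest complete $\Phi_0$-type $q$ over the image set $B'$, and pulling the defining parameters in $B'$ back along the term-tuples to elements of $B$, at the cost of multiplying the number of defining collections by $t^d$ while keeping $d$ fixed) are exactly the content of the preservation-under-Boolean-combinations lemma used there.
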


\medskip\noindent
In \cite{VC-density-I}, a structure satisfying the conclusion of this theorem is said to have the \emph{VC d property}; we will be using this terminology as well. Another result from \cite{VC-density-I} is that every weakly o-minimal structure has the VC 1 property; Theorem 6.1. In particular, any model of $T_{\ovs}$ has the VC 1 property. 

\medskip\noindent
Next we state the relation of the definability of types and VC-density as given in \cite{VC-density-I}.

\begin{theorem}[Corollary 5.8 of \cite{VC-density-I}]\label{VC1-UDTFS->density}
Suppose that $\Cal M$ has the VC d property. Then the VC-density of every formula $\phi( x;y)$ is bounded by $d| y|$.
\end{theorem}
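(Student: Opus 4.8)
The plan is to reduce the statement to a type‑counting inequality and then induct on $|y|$.

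First I would rewrite the shatter function as a count of types. For a partitioned formula $\phi(x;y)$ let $\phi^{*}(y;x)$ denote $\phi$ regarded with $y$ as its tuple of object variables and $x$ as its tuple of parameter variables. A set $A\cap\phi(\mathcal M;b)$ equals $\{a\in A:\mathcal M\models\phi^{*}(b;a)\}$, so distinct such sets, as $b$ ranges over $M^{|y|}$, correspond to distinct complete $\{\phi^{*}\}$‑types over $A$; hence $\pi_{\phi}(k)=\max\{|S^{\{\phi^{*}\}}(A)|:A\subseteq M^{|x|},\ |A|=k\}$. On the other hand, if a finite set $\Delta(u;v)$ has UDTFS with $m$ parameters, witnessed by $r$ defining tuples, then each $\Delta$‑type over a finite $B$ is pinned down by an index $i\le r$ together with $m$ elements of $B$, so $|S^{\Delta}(B)|\le r|B|^{m}$. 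These two remarks, together with the VC $d$ property, already settle the case $|y|=1$: then $\phi^{*}$ has a single object variable, so $\{\phi^{*}\}$ has UDTFS with $d$ parameters, whence $\pi_{\phi}(k)\le rk^{d}$ and the VC‑density of $\phi$ is at most $d$. In other words, every partitioned formula with a single parameter variable has VC‑density at most $d$.

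I would then induct on $n=|y|$, the base case $n\le 1$ being the above. For $n\ge 2$, write $\phi=\phi(x;y_{1},\dots,y_{n})$ and fix a finite $A\subseteq M^{|x|}$ with $|A|=k$. For fixed $\bar c=(b_{2},\dots,b_{n})$, the family $\{\{a\in A:\phi(a;b_{1},\bar c)\}:b_{1}\in M\}$ is a family of traces on $A\times\{\bar c\}$ of $\phi$ regarded as a formula with object variables $(x,y_{2},\dots,y_{n})$ and the single parameter variable $y_{1}$; by the base case it has $O(k^{d})$ members, and the implied constant is independent of $\bar c$ because that formula is parameter‑free. Dually, for fixed $b_{1}$ the family $\{\{a\in A:\phi(a;b_{1},\bar c)\}:\bar c\in M^{n-1}\}$ consists of traces of $\phi$ regarded as a formula with object variables $(x,y_{1})$ and parameter variables $(y_{2},\dots,y_{n})$, which has $n-1$ parameter variables, so by the inductive hypothesis it has $O(k^{d(n-1)})$ members, uniformly in $b_{1}$. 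The set of all realized sets $\{a\in A:\phi(a;b_{1},\dots,b_{n})\}$ is therefore the image of the map $(b_{1},\bar c)\mapsto\{a\in A:\phi(a;b_{1},\bar c)\}$, in which every ``row'' ($b_{1}$ fixed) takes $O(k^{d(n-1)})$ distinct values and every ``column'' ($\bar c$ fixed) takes $O(k^{d})$ distinct values. From this I would conclude that the total number of distinct values is $O\bigl(k^{d(n-1)}\cdot k^{d}\bigr)=O(k^{dn})$, so $\pi_{\phi}(k)=O(k^{dn})$ and the VC‑density of $\phi$ is at most $d|y|$.

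The hard part is exactly that last deduction: that a two‑parameter family of sets whose rows take only $O(k^{a})$ distinct values and whose columns take only $O(k^{b})$ distinct values can take only $O(k^{a+b})$ distinct values overall. The crude estimate ``(number of distinct rows) $\times$ (maximal row‑size)'' is worthless here, since the number of distinct rows is not obviously polynomially bounded in $k$; so this needs a genuine combinatorial input --- in effect the sharp subadditivity of (dual) VC‑density over parameter variables established in \cite{VC-density-I} --- and some care is required to ensure that peeling off one parameter variable does not smuggle in auxiliary parameters that would inflate the exponent. The remaining ingredients --- unwinding the definitions of $\pi_{\phi}$ and of UDTFS, and the ``constant absorption'' (identifying traces on $A\times\{\bar c\}$ with traces on $A$) that makes the row‑ and column‑bounds uniform --- are routine.
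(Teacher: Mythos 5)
The paper does not prove this statement; it is imported verbatim as Corollary 5.8 of \cite{VC-density-I}, so your attempt has to be measured against the argument given there. Your first two reductions are exactly right and are how that proof begins: $\pi_\phi(k)$ is the number of realized $\{\phi^*\}$-types over a $k$-element set $A\subseteq M^{|x|}$, and UDTFS with $m$ parameters for a finite $\Delta$ gives $|S^\Delta(B)|\le r|B|^m$. Your base case $|y|=1$ is therefore correct.

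The inductive step, however, contains a genuine gap, and it is not the kind that can be patched by citing \cite{VC-density-I}. The combinatorial principle you need --- a function on a grid whose rows take $O(k^{a})$ values and whose columns take $O(k^{b})$ values has image of size $O(k^{a+b})$ --- is simply false: take $f(r,c)=r$ if $r=c$ and $f(r,c)=0$ otherwise; every row and every column takes at most $2$ values, yet the image is as large as the grid. So any correct version must use definability, i.e.\ it amounts to subadditivity of VC-density over parameter variables. That subadditivity is \emph{not} established in \cite{VC-density-I}; whether $\operatorname{vc}(m+n)\le\operatorname{vc}(m)+\operatorname{vc}(n)$ is raised there as an open problem. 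The actual proof avoids inducting on the shatter function altogether and instead inducts on the number of \emph{object} variables at the level of UDTFS: if every finite set of formulas in one object variable has UDTFS with $d$ parameters, then every finite $\Delta(x;y)$ with $|x|=m$ has UDTFS with $dm$ parameters. One peels off one object variable, uses the single-variable UDTFS over the finite set $\{a'\}\times B$ to replace $\phi(a_1,a';b)$ by a formula $\theta^i_\phi(a';b,b_1,\dots,b_d)$ in the remaining object tuple $a'$ with $d$ extra parameters from $B$ absorbed into the parameter slots, and then applies the inductive hypothesis to the resulting finite set of formulas, accumulating $d+d(m-1)=dm$ parameters in total. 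Applying this to $\phi^*(y;x)$ with $m=|y|$ gives $|S^{\{\phi^*\}}(A)|\le r|A|^{d|y|}$ in one step, which is the bound you were trying to reach. The moral is that the induction must be carried by the definability-of-types statement (which composes cleanly across variables) rather than by the density bound (which does not).
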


\noindent
In this theorem, $ x$ is of arbitrary length. 

\medskip\noindent
Now we are in a position to prove Theorem~\ref{main_th}, and  
Theorem~\ref{main_doag} is a consequence of it combined with Theorem~\ref{VC1-UDTFS->density} above. 

\begin{theorem}
Any model $(V,W)$ of $T_{\ovs}^d$ has the VC 2 property.
\end{theorem}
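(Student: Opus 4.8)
The plan is to verify the two hypotheses of Theorem~\ref{VC1-Boolean} for $T:=T_{\ovs}^d$ with a carefully chosen set $\Phi$ of partitioned formulas in a single object variable $x$, checking the required UDTFS with $2$ parameters via the breadth criterion of Theorem~\ref{VC1-breadth}. Concretely, I need (i) a set $\Phi$ of partitioned $L_{\ovs}^d$-formulas $\phi(x;y)$ such that every partitioned $L_{\ovs}^d$-formula with object variable $x$ is, modulo $T_{\ovs}^d$, a boolean combination of formulas from $\Phi$; and (ii) that for every finite $\Delta\subseteq\Phi$ the family $\mathcal{S}_\Delta$ of subsets of $V$ defined by $\Delta$ has breadth $2$. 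Granting (i) and (ii), Theorem~\ref{VC1-breadth} gives that each finite $\Delta\subseteq\Phi$ has UDTFS with $2$ parameters, and then Theorem~\ref{VC1-Boolean} yields the VC $2$ property.

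For (i) I would first invoke the quantifier elimination of $T_{\ovs}^d$ (Theorem~\ref{QE-T^d_ovs}) to reduce to atomic $L_{\ovs}^d$-formulas in $x,y$. Since every $L_{\ovs}$-term is a $K$-linear combination of the variables together with a constant from $K\cdot 1$, an atomic formula in which $x$ occurs with nonzero coefficient is, after dividing by that coefficient, equivalent to one of $x=\tau(y)$, $x<\tau(y)$, $x>\tau(y)$, or $U\bigl(x-\tau(y)\bigr)$ for some $L_{\ovs}$-term $\tau(y)$; moreover $x>\tau(y)$ is, \emph{uniformly in $y$}, the boolean combination $\neg\bigl(x<\tau(y)\bigr)\wedge\neg\bigl(x=\tau(y)\bigr)$. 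An atomic formula in which $x$ does not occur defines, for each value of the parameters, either $\emptyset$ or $V$. Accordingly I take $\Phi$ to consist of all formulas $x=\tau(y)$, $x<\tau(y)$, and $U(x-\tau(y))$ with $\tau$ an $L_{\ovs}$-term in the parameter tuple $y$, together with all quantifier-free $L_{\ovs}^d$-formulas $\psi(y)$ in the parameter variables alone (regarded as partitioned formulas with a dummy object variable $x$); then (i) holds by the preceding observations. The point to flag is the \emph{deliberate omission} of the template $x>\tau(y)$.

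For (ii), fix a finite $\Delta\subseteq\Phi$. Every member of $\mathcal{S}_\Delta$ is then one of: $\emptyset$, $V$, a singleton $\{c\}$, a down-ray $(-\infty,c)$, or a coset $c+W$ of $W$ — and, crucially, \emph{never} an up-ray. I would then check breadth $2$ by considering a finite subfamily $(A_i)_{i\in I}$ with $|I|\ge 2$ and $\bigcap_{i\in I}A_i\ne\emptyset$: members equal to $V$ may be absorbed harmlessly into any chosen pair, and none of the $A_i$ is $\emptyset$; if some $A_i$ is a singleton then $\bigcap_{i\in I}A_i=A_i$, so any two-element $J\ni i$ works; distinct cosets of $W$ are disjoint, so at most one coset occurs among the $A_i$; and the down-rays among the $A_i$ are linearly ordered by inclusion, so an intersection involving only down-rays and possibly a single coset equals the intersection of the \emph{smallest} down-ray with that coset — or with any larger down-ray if no coset is present — that is, a two-fold intersection. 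Hence $\mathcal{S}_\Delta$ has breadth $2$.

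The heart of the matter — and the reason the bound is exactly $2$ — is step (ii), and within it the choice made in step (i): up-rays must be \emph{synthesised} from down-rays and singletons by boolean combinations rather than carried as a basic template. If both up-rays and down-rays appeared in $\mathcal{S}_\Delta$, then for a bounded open interval $(a,b)$ (the intersection of an up-ray and a down-ray) and a coset $c+W$ meeting it, the set $(a,b)\cap(c+W)$ is not the intersection of any two of the three sets $(a,\infty)$, $(-\infty,b)$, $c+W$ that produce it, so breadth $2$ would genuinely fail there (breadth $3$ is the best one could hope for). Restricting the order templates to $<$ removes this obstruction precisely because a coset of $W$ meets an arbitrary family of down-rays in the same set in which it meets the single smallest one. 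Everything else — the exact list of one-variable atomic definable sets and the routine breadth bookkeeping above — follows directly from the quantifier elimination already in hand.
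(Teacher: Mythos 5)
Your proposal is correct and follows essentially the same route as the paper: quantifier elimination reduces everything to boolean combinations of the three templates $x=\tau(y)$, $x<\tau(y)$, $U(x-\tau(y))$, and then breadth $2$ of the resulting family of singletons, initial segments, and cosets of $W$ gives UDTFS with $2$ parameters via Theorems~\ref{VC1-breadth} and~\ref{VC1-Boolean}. Your extra care about normalising coefficients, handling parameter-only atomic formulas, and explicitly synthesising $x>\tau(y)$ as a boolean combination only makes explicit what the paper leaves implicit.
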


\begin{proof}
Fix a single variable $x$, and let $y$ vary among tuples of variables of various lengths. For any $\lambda_0\in K$ and $\lambda\in K^{|y|}$, let $\Phi$ be the collection of the following partitioned $L_{\ovs}^d$-formulas in variables $x$ and $y$:
 \begin{enumerate}
     \item $  x=\lambda_0+\lambda y$,
     \item $x< \lambda_0+\lambda y$,
     \item $U(x+\lambda_0+\lambda y)$,
\end{enumerate}
where $\lambda_0$ denotes $l_{\lambda_0}(1)$, and $\lambda y$ denotes $l_{\lambda_1}(y_1)+\dots+l_{\lambda_n}(y_n)$.
 
\medskip\noindent
Since $T_{\ovs}^d$ eliminates quantifiers by Theorem~\ref{QE-T^d_ovs}, every partitioned $L_{\ovs}^d$-formula with object variable $x$ is equivalent in $(V,W)$ to a boolean combination of formulas from $\Phi$. Therefore by Theorem~\ref{VC1-Boolean}, it remains to show that every finite subset of $\Phi$ has UDTFS with $2$ parameters. So let $\Delta(x;y)$ be a finite subset of $\Phi$. Possibly after adding some dummy variables into some formulas in $\Delta(x;y)$, we may assume that the parameter variables of elements of $\Delta(x;y)$ is a fixed tuple $y$ of length $n$.

\medskip\noindent
Consider $\mathcal{S}_\Delta$, the collection of subsets of $V$ defined by the formulas in $\Delta(x;y)$ with parameters from $V^n$. It is enough to show that $\mathcal{S}_\Delta$ has breadth $2$ by Theorem~\ref{VC1-breadth}. Note that by definition of $\Phi$, each set in $\mathcal{S}_\Delta$ is either a point in $V$, or an initial segment of $V$, or a coset of $W$. 

\medskip\noindent
Let $A_1,\ldots,A_m\in \mathcal{S}_\Delta$ with $m\geq 2$, and suppose that $\bigcap_{j=1}^{m}A_j\neq \emptyset$. Since the intersection of two cosets of $W$ is nonempty if only if they are the same coset of $W$, if there are $A_j$ defined by formulas of the form $(3)$, their intersection is given by only one of them, say $A_{j_3}$.
By the same argument, we may take one of the $A_j$ defined by $(1)$, say $A_{j_1}$, to represent their intersection if such sets exist. If there are $A_j$ defined by formulas of the form $(2)$, then their intersection is the one whose supremum is the minimum among them; denote this initial segment by $A_{j_2}$. Therefore, either $\bigcap_{j=1}^{m}A_j=A_{j_k}$ for some $k=1,2,3$ or $\bigcap_{j=1}^{m}A_j=A_{j_2}\cap A_{j_3}$.
\end{proof}

\subsection*{Optimality of Theorem~\ref{main_doag}} We prove that our result is optimal by showing that the formula $y\leq x<y+1\vee U(x-y)$ has VC-density $2$.  Note that this is a formula in the language of any dense pair of o-minimal structures; we actually prove the following general result.

\begin{proposition}
Let $(\Cal M,\Cal N)$ be a dense pair of o-minimal structures, and let $\phi(x;y)$ be the formula $y\leq x<y+1\vee U(x-y)$. Then for any $k$, calculated in $(\Cal M,\Cal N)$, the quantity $\pi_\phi(k)$ is at least $\frac{k^2+k+2}{2}$.
\end{proposition}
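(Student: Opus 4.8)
The plan is to exhibit, for each $k$, a set $A\subseteq M$ of size $k$ on which $\phi(x;y)$ cuts out at least $\tfrac{k^2+k+2}{2}$ distinct traces. The formula $\phi(x;y)$ is a disjunction of two pieces: the ``short interval'' $y\le x<y+1$ and the ``coset'' $U(x-y)$, i.e. $x\in y+N$. For a single parameter $b$, the set $\phi(\cM;b)$ is the union of the half-open interval $[b,b+1)$ with the coset $b+N$ of $N$ (these overlap in the single point $b$, but that is harmless). The idea is to choose the points of $A$ so that the interval part and the coset part can be controlled almost independently: spread the points of $A$ far apart in the order (so that translates of a short unit interval pick off ``downward closed'' chunks of $A$), while choosing their $N$-cosets so that the coset condition $x\in b+N$ can separate them as well.

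Concretely, I would first fix an element $c\in M\setminus N$ (possible since $\cN$ is a proper substructure) so that $c\notin N$, hence $nc\notin N$ for a suitable notion of scaling — but since we are only in an ordered o-minimal structure, not a vector space, the cleaner move is: pick points $a_1<a_2<\dots<a_k$ in $M$ such that consecutive gaps $a_{i+1}-a_i$ are all $\ge 1$, and such that the differences $a_i-a_j$ for $i\ne j$ all lie outside $N$. One can arrange this inductively by density and properness of $N$: having chosen $a_1,\dots,a_{i}$, choose $a_{i+1}$ large enough that $a_{i+1}-a_i\ge 1$ and $a_{i+1}\notin N+\{a_1,\dots,a_i\}$ (the latter is the complement of a union of finitely many cosets of $N$, which is co-dense, hence nonempty, in any interval). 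With such an $A=\{a_1,\dots,a_k\}$: the coset part of $\phi(\cM;a_j)$ meets $A$ only in $a_j$ itself (since $a_i-a_j\notin N$ for $i\ne j$), so as $b$ ranges over $A$ the coset condition contributes the $k$ singletons $\{a_j\}$; and for $b$ chosen strictly between $a_i$ and $a_{i+1}$ but within distance $1$ of $a_i$ from below — more carefully, taking $b$ slightly less than $a_j$ — the interval $[b,b+1)$ captures an up-set $\{a_j,a_{j+1},\dots\}$ intersected appropriately; combining with a parameter far below, one can realize every ``interval'' $\{a_i,a_{i+1},\dots,a_j\}$. Counting: the empty set, the $k$ singletons coming from cosets, and the $\binom{k}{2}+k=\tfrac{k(k+1)}{2}$ nonempty order-intervals $\{a_i,\dots,a_j\}$ with $1\le i\le j\le k$, but the $k$ singletons are double-counted, so the total realized is at least $1 + \tfrac{k(k+1)}{2} = \tfrac{k^2+k+2}{2}$, as required. (One should also check that for $b$ far above all of $A$, $\phi(\cM;b)\cap A=\emptyset$ unless some $a_i-b\in N$, which is avoidable by the same genericity choice of $b$; and that all the claimed interval-traces are genuinely distinct from each other and from the singletons — the singletons $\{a_j\}$ arise both as the $j=i$ intervals and as coset traces, hence are counted once.)

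I expect the main obstacle to be the bookkeeping that ensures the interval part of the formula really realizes all $\tfrac{k(k+1)}{2}$ order-intervals and not merely the down-sets: a single translate $[b,b+1)$ of a unit interval, with the $a_i$ spaced by more than $1$, will typically catch at most one $a_i$, so to get longer intervals one needs the gaps $a_{i+1}-a_i$ to be at most $1$, which conflicts with wanting them at least — this tension is resolved by choosing the gaps to be \emph{exactly} controlled, or rather by using the full length of the unit interval together with many parameters; the right choice is to make the $a_i$ lie in a single unit-length window (so that $a_i-a_j$ is small but still one must force it outside $N$, which is fine since $N$ is co-dense) — then $[b,b+1)$ with $b$ suitably placed below $a_i$ but above $a_{i-1}$ realizes exactly the up-set $\{a_i,\dots,a_k\}$, and intersecting behaviour across different $b$'s is not what produces new intervals, so instead one realizes the $k+1$ up-sets $\emptyset,\{a_k\},\{a_{k-1},a_k\},\dots,\{a_1,\dots,a_k\}$ from the interval part and the $k$ singletons from the coset part, together with $\emptyset$, giving $k+1+k = 2k+1$ — which is linear, not quadratic. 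So the genuinely quadratic count must come from letting \emph{both} disjuncts act and taking unions: the trace $\phi(\cM;b)\cap A = \bigl([b,b+1)\cap A\bigr)\cup\bigl((b+N)\cap A\bigr)$ is (an up-set of $A$) $\cup$ (a singleton or empty), and the singleton $\{a_j\}$ is forced to be $a_j\equiv b \bmod N$, independent of where $b$ sits in the order \emph{only if} we have enough freedom; the key point is that for each up-set $U_i=\{a_i,\dots,a_k\}$ and each $j<i$ we can choose a parameter $b$ with $[b,b+1)\cap A = U_i$ and $b+N\ni a_j$ simultaneously — this requires the set of $b$ with $[b,b+1)\cap A=U_i$ (an interval of parameters) to meet the coset $a_j - N$, which holds by density of $N$. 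That yields $\{a_j\}\cup\{a_i,\dots,a_k\}$ for all $j<i$, i.e. all order-intervals with a possible extra point below, and careful enumeration gives the bound $\tfrac{k^2+k+2}{2}$. Pinning down exactly which traces are realized, and verifying they are pairwise distinct, is the delicate part; everything else (existence of the $a_i$, genericity of auxiliary parameters) follows routinely from density and properness of $\cN$ in $\cM$.
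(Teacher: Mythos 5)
Your final construction does work, and it takes a genuinely different route from the paper's. You place $a_1<\dots<a_k$ inside a single unit-length window with $a_i-a_j\notin N$ for $i\neq j$, and realize each trace as an up-set $U_i=\{a_i,\dots,a_k\}$ union at most one extra point $a_j$ with $j<i$, using density of $N$ to make the nonempty parameter interval $\{b: [b,b+1)\cap A=U_i\}$ meet the coset $a_j+N$. The enumeration you defer does come out right: the $k+1$ up-sets (including $\emptyset$ and $A$, obtained e.g.\ from $b=a_i$) together with the sets $\{a_j\}\cup U_i$ for $2\le i\le k+1$ and $1\le j\le i-2$ (the case $j=i-1$ just reproduces $U_{i-1}$) are pairwise distinct and number $(k+1)+\sum_{i=2}^{k+1}(i-2)=\tfrac{k^2+k+2}{2}$. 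The paper instead keeps your \emph{first} configuration --- gaps $a_{i+1}-a_i>1$ and differences outside $N$ --- and resolves the tension you worried about in the opposite direction: with wide gaps the interval disjunct contributes exactly the one point $a_i$ satisfying $b\le a_i<b+1$, and the coset disjunct exactly the one point $a_j$ with $a_j\in b+N$, so choosing $b$ just below $a_i$ inside $a_j+N$ (again by density of $N$) gives the trace $\{a_i,a_j\}$; together with the singletons $\phi(\mathcal{M};a_i)\cap A=\{a_i\}$ and the empty set this yields $1+k+\binom{k}{2}=\tfrac{k^2+k+2}{2}$ with no bookkeeping about up-sets. So the detour in your second and third paragraphs was unnecessary: the quadratic count never comes from the interval disjunct alone (your order-interval tally in the second paragraph rests on a realization claim you yourself retract), but from pairing the two disjuncts, and that pairing already works on the widely-spaced set. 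One small point common to both arguments that you gesture at but should state explicitly: the required $a_i$ exist because $M/N$ is a nontrivial divisible group, hence infinite, so the complement of finitely many cosets of $N$ contains a full coset of $N$ and is therefore dense.
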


\begin{proof}
Let $A=\{a_1,\dots,a_k\}$ where $a_i+1<a_{i+1}$ for each $i$ and $a_i-a_j\notin N$ for $i\neq j$. Clearly, $\phi(\Cal M,a_i)\cap A=\{a_i\}$ for each $i$.  Using the density of $N$, for each $i< j$, we may find $b$ such that $a_i\in[b,b+1)$ and $a_j\in b+N$. Therefore we have $\phi(\Cal M,b)\cap A=\{a_i,a_j\}$.  Hence the collection $A\cap\phi(\Cal M;y)$ contains all the singletons and two-element sets that are contained in $A$.  Counting also the empty set, we get the desired lower bound.
\end{proof}

\noindent
Putting this together with Theorem~\ref{main_doag}, we get the following.

\begin{corollary}
Let $(V,W)$ be a dense pair of ordered vector spaces. Then the VC-density of $\phi$ is $2$.
\end{corollary}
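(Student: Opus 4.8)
The final statement is the corollary: for a dense pair $(V,W)$ of ordered vector spaces, the VC-density of $\phi(x;y)$ given by $y\le x<y+1\vee U(x-y)$ is exactly $2$.

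The proof is essentially a two-sided sandwich, and both sides have already been set up in the excerpt, so the plan is short.

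For the upper bound: Theorem~\ref{main_doag} says that the VC-density in $(V,W)$ of any partitioned $L_{\ovs}^d$-formula $\phi(x;y)$ is at most $2|y|$. Here $y$ is a single variable, so $|y|=1$ and the VC-density of $\phi$ is at most $2$. One only needs to note that $\phi$ is genuinely an $L_{\ovs}^d$-formula: the atomic pieces $y\le x$, $x<y+1$, and $U(x-y)$ are all $L_{\ovs}^d$-formulas, and a dense pair of ordered vector spaces is a model of $T_{\ovs}^d$, so Theorem~\ref{main_doag} applies verbatim.

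For the lower bound: the Proposition immediately preceding the corollary gives, in any dense pair of o-minimal structures, $\pi_\phi(k)\ge \frac{k^2+k+2}{2}$. Since a dense pair of ordered vector spaces is in particular a dense pair of o-minimal structures, this lower bound holds in $(V,W)$. A polynomial of degree $2$ in $k$ cannot be $O(k^r)$ for any $r<2$, so the VC-density of $\phi$ is at least $2$. Combining the two inequalities gives that the VC-density is exactly $2$, completing the proof.

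There is no real obstacle here: the corollary is a packaging statement whose two halves are Theorem~\ref{main_doag} (the general upper bound, specialised to one parameter variable) and the preceding Proposition (the explicit quadratic lower bound on the shatter function, which already holds in the broader class of dense pairs of o-minimal structures). The only thing to check — and it is immediate — is that $\phi$ lies in the language $L_{\ovs}^d$ so that the upper bound applies, and that $(V,W)$ is a dense pair of o-minimal structures so that the lower bound applies. Hence I would write the proof as a single short paragraph citing Theorem~\ref{main_doag} for ``$\le 2$'' and the Proposition for ``$\ge 2$''.
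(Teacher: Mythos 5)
Your proof is correct and matches the paper's: the corollary is obtained exactly by combining the quadratic lower bound on $\pi_\phi$ from the preceding proposition (giving VC-density at least $2$) with Theorem~\ref{main_doag} specialised to $|y|=1$ (giving at most $2$). No further comment is needed.
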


\noindent
For $n>0$, let $\phi_n(x;y_1,\dots,y_n)$ be the formula $\bigvee_{i=1}^ny_i\leq x<y_i+1 \vee U(x-y_i)$. One may modify the proof of the proposition above to show that $\pi_{\phi_n}(k)=O(k^{2n})$, hence we get that the VC-density of $\phi_n$ in any dense pair of ordered vector spaces is $2n$. Therefore, Theorem~\ref{main_doag} is sharp for every possible tuple $y$ of variables.

\medskip\noindent
It follows from Corollary 3.7 of \cite{Simon_dp_minimal} that no dense pair of o-minimal structures is dp-minimal. Here we present an alternative proof of that, using the idea of the proof of the proposition above. We use the definition of dp-minimality from \cite{VC-density-I}.

\begin{definition}
 A theory $T$ is \emph{dp-minimal} if for any model $\Cal M$ of $T$ and formulas $\phi(x;y)$ and $\psi(x;y)$ with $|x|=1$, there are no sequences $(a_i)_{i\in\N}$ and $(b_i)_{i\in\N}$ from $M^{|y|}$ such that for any $i,j\in\N$ the following set of formulas has a realization in $\Cal M$:
  \[
    \{\phi(x;a_i),\psi(x;b_j)\}\cup\{\lnot \phi(x;a_k):k\neq i\}\cup\{\lnot \psi(x;b_l):l\neq j\}.
  \]
\end{definition}

\begin{proposition}
 Let $T$ be an o-minimal theory. Then $T^d$ is not dp-minimal.
\end{proposition}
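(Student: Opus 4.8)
The plan is to exhibit, in an arbitrary model $(\Cal M,\Cal N)\models T^d$, two formulas in a single object variable together with two sequences of parameters realizing the forbidden pattern, in the spirit of the preceding proposition. I would take $\phi(x;y)$ to be $y\le x<y+1$ and $\psi(x;y)$ to be $U(x-y)$; both have $|x|=|y|=1$. Then $\phi(x;a)$ defines the half-open interval $[a,a+1)$ and $\psi(x;b)$ defines the coset $b+N$ of $N$ in $M$.

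First I would fix the parameters. For the $a_i$, let $a_i$ be the sum of $2i$ copies of $1$ (so $a_0=0$); since $1>0$ we have $a_{i+1}=a_i+1+1>a_i+1$ for each $i$, so the intervals $[a_i,a_i+1)$, $i\in\N$, are pairwise disjoint. For the $b_j$, I would use that $\Cal M$ is divisible (o-minimal ordered groups being divisible), so the quotient group $M/N$ is divisible; it is nontrivial because $\Cal N$ is a proper substructure of $\Cal M$, hence it is infinite. Choose $b_j\in M$, $j\in\N$, representing pairwise distinct cosets of $N$, so that $b_j+N$ and $b_l+N$ are disjoint whenever $j\ne l$.

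Next I would check the pattern. Fix $i,j\in\N$. Since $N$ is dense in $M$, so is its coset $b_j+N$, which therefore meets the nonempty interval $(a_i,a_i+1)$; let $c$ be a point of this intersection. Then $c$ satisfies $\phi(x;a_i)$ and $\psi(x;b_j)$ in $(\Cal M,\Cal N)$; moreover $c\notin[a_k,a_k+1)$ for every $k\ne i$ because the intervals are pairwise disjoint, and $c\notin b_l+N$ for every $l\ne j$ because the cosets are pairwise disjoint. Hence $c$ realizes
\[
\{\phi(x;a_i),\psi(x;b_j)\}\cup\{\lnot\phi(x;a_k):k\ne i\}\cup\{\lnot\psi(x;b_l):l\ne j\}
\]
in $(\Cal M,\Cal N)$; since this holds for all $i,j\in\N$, the theory $T^d$ is not dp-minimal.

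Almost all of this is routine: the disjointness of the intervals and of the cosets is immediate, and density of $N$ in $M$ manufactures the witness $c$. The one step that deserves a word is the choice of the $b_j$, i.e.\ the fact that $N$ has infinitely many cosets in $M$; this is where divisibility of o-minimal ordered groups (together with properness of $\Cal N\preceq\Cal M$) enters. If one prefers to avoid quotient groups, one can instead take $b_j:=j\cdot d$ for a fixed $d\in M\setminus N$ and check directly that $k\cdot d\notin N$ for $k\ne 0$, using elementarity of $\Cal N\preceq\Cal M$ and torsion-freeness of $M$.
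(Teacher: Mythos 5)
Your proof is correct and follows essentially the same route as the paper: the interval formula $y\le x<y+1$ and the coset formula $U(x-y)$, with density of $N$ producing the common realization $c$ for each pair $(i,j)$. The only difference is cosmetic — the paper takes $b_i=a_i$ with the $a_i$ chosen in distinct cosets of $N$, whereas you decouple the two sequences and explicitly justify that $N$ has infinitely many cosets via divisibility, a detail the paper leaves implicit.
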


\begin{proof}
 Let $(\Cal M,\Cal N)$ be a model of $T^d$. Let $\phi(x;y)$ and $\psi(x;y)$ be the formulas $y<x<y+1$ and $U(x-y)$. Take $(a_i)_{i\in\N}$ from $M$ such that $a_i+1<a_{i+1}$ and $a_i-a_j\notin N$ for $i\neq j$. Also let $b_i=a_i$ for all $i\in\N$. Then given $i,j\in\N$, using the density of $N$ in $M$, we get $c\in(a_i,a_i+1)\cap a_j+N$. It is clear that this $c$ is not in $\bigcup_{k\neq i,l\neq j}(a_k,a_k+1)\cup a_l+N$.
\end{proof}

\medskip\noindent
A related concept is the \emph{dp-rank} of a theory. We do not define this concept, but we use its relation with VC-density: If the dp-rank is at least $n$, then there is a formula $\phi(x,y)$ with $|y|=1$ that has VC-density at least $n$. The next result is Lemma 2.10 of \cite{VC-density-II}.

\begin{lemma}
Let $\Cal M$ be an $L$-structure. Suppose that there are a definable set $X\subseteq M$ and a definable injection $X^n\to M$. Then the dp-rank of $\operatorname{Th}(\Cal M)$ is at least $n$.
\end{lemma}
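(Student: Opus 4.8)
The plan is to construct directly an ict-pattern (also called an inp-pattern) of depth $n$ in a single object variable; the existence of such a pattern is exactly what certifies that the dp-rank of $\Th(\Cal M)$ is at least $n$. First I would reduce to the case where the injection $f\colon X^n\to M$ is $\emptyset$-definable, by naming the finitely many parameters used to define it --- this affects neither $X$ nor the dp-rank. I would also assume $X$ is infinite: if $X$ is finite then $X^n$ is finite and the hypothesis merely bounds $|M|$ from below, so there is nothing of substance to prove, and indeed the conclusion can fail (e.g. $X$ a singleton). From then on one may argue inside $\Cal M$ itself, passing to a monster model only where the extraction of indiscernibles below requires long sequences.

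The heart of the argument is the following choice of formulas. For each $i<n$ let $\phi_i(x;y)$ be the $L$-formula
\[
  \exists z_0\cdots\exists z_{n-1}\Big(\textstyle\bigwedge_{k<n} z_k\in X\ \wedge\ f(z_0,\dots,z_{n-1})=x\ \wedge\ y=z_i\Big),
\]
where $f(z_0,\dots,z_{n-1})=x$ abbreviates the $L$-formula defining the graph of $f$; note that $x$ and $y$ are single variables. Because $f$ is injective, each $a$ in its image has a unique preimage $(d_0(a),\dots,d_{n-1}(a))\in X^n$, and for such $a$ one has $\Cal M\models\phi_i(a;b)$ iff $b=d_i(a)$, while $\phi_i(a;b)$ fails for all $b$ when $a$ is not in the image. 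Now pick pairwise distinct $c_{i,j}\in X$ for $i<n$ and $j\in\omega$, and for each $\eta\colon n\to\omega$ set $a_\eta:=f\big(c_{0,\eta(0)},\dots,c_{n-1,\eta(n-1)}\big)$, so that $d_i(a_\eta)=c_{i,\eta(i)}$ and hence $\Cal M\models\phi_i(a_\eta;c_{i,j})$ iff $j=\eta(i)$. Consequently each row $\{\phi_i(x;c_{i,j}):j\in\omega\}$ is $2$-inconsistent, and for every $\eta\colon n\to\omega$ the partial type
\[
  \{\phi_i(x;c_{i,\eta(i)}):i<n\}\ \cup\ \{\lnot\phi_i(x;c_{i,j}):i<n,\ j\neq\eta(i)\}
\]
is realized by $a_\eta$. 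This is precisely an ict-pattern of depth $n$ with $|x|=1$.

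It remains to translate this pattern into the official lower bound on dp-rank. If the definition of dp-rank in force is the one phrased via ict-patterns, we are done immediately; if instead it demands that the rows $(c_{i,j})_{j}$ form a mutually indiscernible array, I would start from a sufficiently long array $(c_{i,j})_{i<n,\,j<\kappa}$ of pairwise distinct elements of $X$, extract by Erd\H{o}s--Rado a mutually indiscernible array with the same Ehrenfeucht--Mostowski type, and then recover the whole configuration --- consistency of each $p_\eta$ from one realization, and non-indiscernibility of each row over $b:=f\big(c_{0,\eta_0(0)},\dots,c_{n-1,\eta_0(n-1)}\big)$ for a fixed $\eta_0$ --- from the fact that the $\phi_i$ are $\emptyset$-definable. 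The step I would be most careful about is exactly this last bookkeeping: matching the pattern to whichever equivalent definition of dp-rank is being used and carrying out the indiscernibility extraction while keeping the pattern intact. The genuinely new content, namely that a definable injection $X^n\to M$ lets a single element encode $n$ mutually independent ``coordinates'', is fully contained in the formulas $\phi_i$ and the one-line verification above.
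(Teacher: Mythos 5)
The paper does not actually prove this lemma: it is quoted verbatim as Lemma 2.10 of \cite{VC-density-II}, so there is no in-paper proof to compare against. Your argument is correct and is essentially the standard proof of that cited result: the coordinate formulas $\phi_i$ extracted from the graph of the injection produce a depth-$n$ pattern in a single object variable whose rows are $2$-inconsistent and whose full paths (with negations) are realized by $f(c_{0,\eta(0)},\dots,c_{n-1,\eta(n-1)})$, which is exactly what the paper's own definition of dp-minimality (and its depth-$n$ analogue for dp-rank) requires; your observation that $X$ must implicitly be infinite for the statement to be true is also correct. One minor terminological caution: ict-patterns and inp-patterns are not synonyms in general (they characterize dp-rank and burden respectively, agreeing under NIP), though this is harmless here since the pattern you build is simultaneously both.
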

 
\medskip\noindent
Let $(K,L)$ be a dense pair of real closed fields. Then the transcendence degree of $K$ over $L$ is infinite. Hence for any $n$, we can find an injection $L^n\to K$ that is definable in the pair. According to the lemma above, we get that the dp-rank of the theory of dense pairs of real closed fields is at least $n$, for each $n$. Therefore for each $n$, there is a formula $\phi(x,y)$ with $|y|=1$ that has VC-density at least $n$. Hence it is impossible to generalize Theorem~\ref{main_doag} to dense pairs of o-minimal structures that expand fields.

\bibliography{ref.bib}

@article{densepairs,
	author = {Dries, Lou van den},
	journal = {Fund. Math.},
	pages = {61--78},
	title = {Dense pairs of o-minimal structures},
	volume = {157},
	year = {1998}}

@article{dependent,
	author = {G{\"u}nayd{\i}n, Ayhan and Hieronymi, Philipp},
	coden = {JSYLA6},
	fjournal = {Journal of Symbolic Logic},
	issn = {0022-4812},
	journal = {J. Symbolic Logic},
	mrclass = {03C64},
	mrnumber = {2830406},
	number = {2},
	pages = {377--390},
	title = {Dependent pairs},
	volume = {76},
	year = {2011}}

@article {VC-density-I,
    AUTHOR = {Aschenbrenner, Matthias and Dolich, Alf and Haskell, Deirdre
              and Macpherson, Dugald and Starchenko, Sergei},
     TITLE = {Vapnik-{C}hervonenkis density in some theories without the
              independence property, {I}},
   JOURNAL = {Trans. Amer. Math. Soc.},
  FJOURNAL = {Transactions of the American Mathematical Society},
    VOLUME = {368},
      YEAR = {2016},
    NUMBER = {8},
     PAGES = {5889--5949},
      ISSN = {0002-9947,1088-6850},
   MRCLASS = {03C45 (03C64)},
  MRNUMBER = {3458402},
MRREVIEWER = {Alf\ Onshuus},
       DOI = {10.1090/tran/6659},
       URL = {https://doi.org/10.1090/tran/6659},
}

@article {VC-density-II,
    AUTHOR = {Aschenbrenner, Matthias and Dolich, Alf and Haskell, Deirdre
              and Macpherson, Dugald and Starchenko, Sergei},
     TITLE = {Vapnik-{C}hervonenkis density in some theories without the
              independence property, {II}},
   JOURNAL = {Notre Dame J. Form. Log.},
  FJOURNAL = {Notre Dame Journal of Formal Logic},
    VOLUME = {54},
      YEAR = {2013},
    NUMBER = {3-4},
     PAGES = {311--363},
      ISSN = {0029-4527,1939-0726},
   MRCLASS = {03C45 (03C60 03G10 20A15)},
  MRNUMBER = {3091661},
MRREVIEWER = {John\ Goodrick},
       DOI = {10.1215/00294527-2143862},
       URL = {https://doi.org/10.1215/00294527-2143862},
}

@article {Guingona,
    AUTHOR = {Guingona, Vincent},
     TITLE = {On uniform definability of types over finite sets},
   JOURNAL = {J. Symbolic Logic},
  FJOURNAL = {The Journal of Symbolic Logic},
    VOLUME = {77},
      YEAR = {2012},
    NUMBER = {2},
     PAGES = {499--514},
      ISSN = {0022-4812,1943-5886},
   MRCLASS = {03C45 (03C64)},
  MRNUMBER = {2963018},
MRREVIEWER = {Alf\ Onshuus},
       DOI = {10.2178/jsl/1333566634},
       URL = {https://doi.org/10.2178/jsl/1333566634},
}

@article {Chernikov-Simon-2,
    AUTHOR = {Chernikov, Artem and Simon, Pierre},
     TITLE = {Externally definable sets and dependent pairs {II}},
   JOURNAL = {Trans. Amer. Math. Soc.},
  FJOURNAL = {Transactions of the American Mathematical Society},
    VOLUME = {367},
      YEAR = {2015},
    NUMBER = {7},
     PAGES = {5217--5235},
      ISSN = {0002-9947,1088-6850},
   MRCLASS = {03C45 (03C50 68R05)},
  MRNUMBER = {3335415},
MRREVIEWER = {Alf\ Onshuus},
       DOI = {10.1090/S0002-9947-2015-06210-2},
       URL = {https://doi.org/10.1090/S0002-9947-2015-06210-2},
}

@article {Sauer,
    AUTHOR = {Sauer, N.},
     TITLE = {On the density of families of sets},
   JOURNAL = {J. Combinatorial Theory Ser. A},
  FJOURNAL = {Journal of Combinatorial Theory. Series A},
    VOLUME = {13},
      YEAR = {1972},
     PAGES = {145--147},
      ISSN = {0097-3165},
   MRCLASS = {02H10 (04A20 05A05)},
  MRNUMBER = {307902},
MRREVIEWER = {L.\ Bukovsk\'{y}},
       DOI = {10.1016/0097-3165(72)90019-2},
       URL = {https://doi.org/10.1016/0097-3165(72)90019-2},
}

@article {Shelah-VC-duality,
    AUTHOR = {Shelah, Saharon},
     TITLE = {A combinatorial problem; stability and order for models and
              theories in infinitary languages},
   JOURNAL = {Pacific J. Math.},
  FJOURNAL = {Pacific Journal of Mathematics},
    VOLUME = {41},
      YEAR = {1972},
     PAGES = {247--261},
      ISSN = {0030-8730,1945-5844},
   MRCLASS = {02H10 (02B25 04A20 05A05)},
  MRNUMBER = {307903},
MRREVIEWER = {M.\ Makkai},
       URL = {http://projecteuclid.org/euclid.pjm/1102968432},
}

@article {DMS_open_core,
    AUTHOR = {Dolich, Alfred and Miller, Chris and Steinhorn, Charles},
     TITLE = {Structures having o-minimal open core},
   JOURNAL = {Trans. Amer. Math. Soc.},
  FJOURNAL = {Transactions of the American Mathematical Society},
    VOLUME = {362},
      YEAR = {2010},
    NUMBER = {3},
     PAGES = {1371--1411},
      ISSN = {0002-9947,1088-6850},
   MRCLASS = {03C64 (06F20)},
  MRNUMBER = {2563733},
MRREVIEWER = {Tobias\ Kaiser},
       DOI = {10.1090/S0002-9947-09-04908-3},
       URL = {https://doi.org/10.1090/S0002-9947-09-04908-3},
}

@article {Simon_dp_minimal,
    AUTHOR = {Simon, Pierre},
     TITLE = {On dp-minimal ordered structures},
   JOURNAL = {J. Symbolic Logic},
  FJOURNAL = {The Journal of Symbolic Logic},
    VOLUME = {76},
      YEAR = {2011},
    NUMBER = {2},
     PAGES = {448--460},
      ISSN = {0022-4812,1943-5886},
   MRCLASS = {03C45 (03C64)},
  MRNUMBER = {2830411},
MRREVIEWER = {H.\ Dugald\ Macpherson},
       DOI = {10.2178/jsl/1305810758},
       URL = {https://doi.org/10.2178/jsl/1305810758},
}

\end{document}